\def\constr#1^#2{\mathrel{\mathop{\kern 0pt#1}\limits^{#2}}}
\def\build#1_#2{\mathrel{\mathop{\kern 0pt#1}\limits_{#2}}}
\newtheorem{theorem}{Theorem}[section]
\newtheorem{definition}{Definition}[section]
\newtheorem{corollary}{Corollary}[section]
\newtheorem{proposition}{Proposition}[section]
\newtheorem{lemma}{Lemma}[section]
\newtheorem{remark}{Remark}[section]
\newtheorem{example}{Example}[section]
\numberwithin{equation}{section}
\def\fnt#1#2{\footnotetext{\kern-.3em
    {$^{\mbox{\scriptsize #1}}$}{#2}}}
\chardef\bslchar=`\\ 
\newcommand{\addbslash}{\expandafter\@addbslash\string}
\def\@addbslash#1{\bslchar\@nobslash#1}
\newcommand{\nobslash}{\expandafter\@nobslash\string}
\def\@nobslash#1{\ifnum`#1=\bslchar\else#1\fi}
\newcommand{\ntt}{\normalfont\ttfamily}
\def\@boxorbreak{\leavevmode
  \ifmmode\hbox\else\ifdim\lastskip=\z@\penalty9999 \fi\fi}
\DeclareRobustCommand{\cs}[1]{\@boxorbreak{\ntt\addbslash#1\@empty}}
\makeatother \pagestyle{myheadings}\frenchspacing
\markboth{\sc}{  Ould Ahmed Mahmoud Sid Ahmed
}
\title{\bf ON A JOINT $(m;(q_1,...,q_d))$-PARTIAL ISOMETRIES AND A JOINT $m$-INVERTIBLE $d$- TUPLE OF OPERATORS ON A HILBERT SPACE }
\author{ Ould Ahmed  Mahmoud Sid Ahmed
\\
  Mathematics Department, College of Science. Aljouf
University\\Aljouf 2014. Saudi Arabia
 \\
 sidahmed@ju.edu.sa}
\begin{document}

\maketitle
\begin{abstract}
 For $d \in \mathbb{N}$ with $d \geq 1$, let ${\bf
\large T}=(T_1,T_2,....,T_d) \in \mathcal{B}(\mathcal{H})^d$ with
$T_j:\mathcal{H}\longrightarrow \mathcal{H}$ be a tuple of commuting
bounded linear operators. Let  $\alpha = (\alpha_1;
\alpha_2,...,\alpha_d) \in \mathbb{Z}_+^d,q=(q_1,q_2,...,q_d) \in
\mathbb{Z}_+^d$ denote tuples of nonnegative integers respectively,
and set $|\alpha|: = \displaystyle\sum_{1\leq j\leq d}|\alpha_j|,$
$\alpha!: =\alpha_1!...\alpha_d!$.  Further, define ${\bf \large
T}^\alpha:= T_1^{\alpha_1} T_2^{\alpha_2}...T_d^{\alpha_d}$. A
bounded linear $d$-tuple of commuting operators  ${\bf \large
T}=(T_1,T_2,...,T_d)$ acting on a Hilbert  space $\mathcal{H}$ is
called an $(m; (q_1,q_2,...,q_d))$- partial isometry, if
\begin{eqnarray*} {\bf \large T}^q\Bigg(\sum_{0\leq k\leq m}(-1)^k\binom{m}{k}\sum_{|\alpha|=k}\frac{k!}{\alpha!}{\bf \large T}^{*\alpha}{\bf \large  T}^{\alpha}\Bigg)=0.
\end{eqnarray*}
The aim of the present paper is,firstly we study the concepts of
$(m;(q_1,...,q_d))$-partial isometries on a Hilbert space; secondly,
we introduce the notion of $m$-invertibility  of tuples of operators
as a natural generalization of the $m$-invertibility in single
variable operators.
\end{abstract}
{\bf Keywords.} $m$-isometric tuple, partial isometry, Left
$m$-inverse, Right $m$-inverse, \par\vskip 0.2 cm \quad \qquad \quad
joint spectrum, joint approximate spectrum.\par \vskip 0.2 cm
\noindent {\bf Mathematics Subject Classification (2010).} Primary:
17A13. Secondary:  47A16.
\begin{center}
\section{INTRODUCTION AND TERMINOLOGIES}
\end{center}
Let $\mathcal{H}$ be an infinite dimensional  separable complex
Hilbert space  and denote by $\mathcal{B}(\mathcal{H})$ the algebra
of all bounded linear operators from $\mathcal{H}$ to $\mathcal{H}$.
For  $T \in \mathcal{B}(\mathcal{H}) $ we shall write
$\mathcal{N}(T)$ , $\mathcal{R}(T)$  and  $\mathcal{N}(T)^\bot$ for
the null space, the range of $T$  and the othogonal complement of
$\mathcal{N}(T)$ respectively. $I = I_{\mathcal{H}}$ being the
identity operator. In what follows $\mathbb{N},\mathbb{Z}_+$ and
$\mathbb{C}$ stands the sets of positive integers, nonnegative
integers and complex numbers respectively. Denote by
$\overline{\lambda}$ the complex conjugate of a complex number
$\lambda$ in $\mathbb{C}$. We shall henceforth shorten $\lambda
I_{\mathcal{H}}-T$ by $\lambda-T$. The spectrum, the point spectrum,
the approximate point spectrum of an operator $T$ are denoted by
$\sigma(T), \sigma_p(T) , \sigma_{ap}(T) $ respectively. $T^*$ means
the adjoint of $T.$ \par \vskip 0.2 cm \noindent The study of tuples
of commuting operators was the subject of a wide literature carrying
out many resemblances with the single case. Some developments toward
this subject have been done in
 $[4],[5],[10],[11],[12],[13],[14],[26]$ , $[27]$  and the references therein.\par\vskip 0.2 cm \noindent
Our aim in this paper is to extend the notions of $m$-partial
isometries ($[24]$) and $(m,q)$-partial isometries ($[21]$) for
single variable operators to the tuples of commuting operators
defined on a complex Hilbert space.\par\vskip 0.2cm \noindent Some
notational explanation is necessary before we begin.
 For $d \in \mathbb{N}$
 $(d \geq 1)$, let ${\bf \large T}=(T_1,T_2,....,T_d) \in
\mathcal{B}(\mathcal{H})^d$ be a tuple of commuting bounded linear
operators. Let  $\alpha = (\alpha_1, \alpha_2,...,\alpha_d) \in
\mathbb{Z}_+^d$ denote tuples of nonnegative integers multi-indices)
and set $|\alpha|: = \displaystyle\sum_{1\leq j\leq d}|\alpha_j|,$
$\alpha!: =\alpha_1!...\alpha_d!$. Further, define $T^\alpha:=
T_1^{\alpha_1} T_2^{\alpha_2}...T_d^{\alpha_d}$ where
$T_j^{\alpha_j}$ denotes the product of $T_i$ times itself
$\alpha_j$ times .\par \vskip 0.2 cm \noindent Now let $p(z,
\overline{z})$ be a non-commutative complex polynomial in $z = (z_1,
..., z_d) $ and $\overline{z} = (\overline{z_1}, ... ,
\overline{z_d})$ given by $p(z, \overline{z})
=\displaystyle\sum_{\alpha,\beta}a_{\alpha,\beta}z^\alpha
\overline{z }^\beta$. If ${\bf T}$ denote an $d$-tuple of bounded
linear operators on a Hilbert space then one can associate with
$p(z, \overline{z}$) an operator polynomial $p({\bf  \large T}, {\bf
 \large T}^*)$
$$p({\bf \large T}, {\bf  \large T}^* )=\sum_{\alpha,\beta}a_{\alpha,\beta}{\bf \large  T}^{*\alpha}
{\bf  \large T}^\beta$$  by replacing $z$ and $\overline{z}$ by
${\bf  \large T} = (T_1, ... , T_d)$ and ${\bf  \large T}^*=
(T_1^*,...,T_d^*)$ respectively.
\par \vskip 0.2 cm \noindent
One of the most important subclasses, of the algebra of all bounded
linear operators acting on a Hilbert space, the class of partial
isometries operators. An operator $T \in \mathcal{B}(\mathcal{H})$
is said to be isometry if $T^*T=I$ and partial isometry if
$TT^*T=T.$ In recent years this classes has been generalized, in
some sense, to the larger sets of operators  so-called
$m$-isometries and $m$-partial isometries. An operator $T \in
\mathcal{B}(\mathcal{H})$ is said to be $m$-isometric  for some
integer $ m \geq 1 $ if it satisfies the operator equation

\begin{eqnarray}
\sum_{0\leq k\leq m}(-1)^k\binom{m}{k}T^{*m-k}T^{m-k}=0.
\end{eqnarray}
 It is immediate that $T$ is $m$-isometric if and only if \begin{equation}\qquad \sum_{0\leq k\leq
m}(-1)^k\binom{m}{k}\|T^{m-k}x\|^2=0
\end{equation}
for all $x  \in \mathcal{H}.$ Major work on $m$-isometries has been
done in a long paper consisting of three parts by Agler and Stankus
$([1, 2, 3])$ and have since then attracted the attention of several
other authors (see for example $ [7],[8],[9],[16]).$ More recently a
generalization of these operators to $m$-partial isometries  has
been studied in the paper of A.Saddi and the present author in
$[24]$ and by the present author in $[21]$.\par \vskip 0.2 cm
\noindent  An operator $T \in \mathcal{B}(\mathcal{H})$ is called an
$m$-partial isometry (see $[24]$) if \begin{eqnarray} T\Bigg(
T^{*m}T^{m}- \binom{m}{1} T^{*m-1}T^{m-1}+ \binom{m}{2}
T^{*m-2}T^{m-2}-....+(-1)^mI\Bigg)=0.
\end{eqnarray}
and it is an $(m,q)$-partial isometry for $m\in \mathbb{N}$ and $q
\in \mathbb{Z}_+$ (see $([21]))$ if
\begin{eqnarray} T^q\Bigg( T^{*m}T^{m}- \binom{m}{1}
T^{*m-1}T^{m-1}+ \binom{m}{2} T^{*m-2}T^{m-2}-....+(-1)^mI\Bigg)=0.
\end{eqnarray}
Gleason and Richter in $[17]$ extend the notion of $m$-isometric
operators to the case of commuting $d$-tuples of bounded linear
operators on a Hilbert space. The defining equation for an
 $m$-isometric tuple ${\bf \large T} = (T_1, ..., T_d) \in
 \mathcal{B}(\mathcal{H})^d$
reads:
\begin{equation}
\sum_{0\leq k\leq
m}(-1)^{m-k}\binom{m}{k}\sum_{|\alpha|=k}\frac{k!}{\alpha!}{\bf
\large T}^{*\alpha}{\bf \large  T}^{\alpha}=0
\end{equation}
or equivalently
\begin{equation}
\sum_{0\leq k\leq
m}(-1)^{m-k}\binom{m}{k}\sum_{|\alpha|=k}\frac{k!}{\alpha!}\|{\bf
\large  T}^{\alpha}x\|^2=0 \;\;\hbox{for all} \;x \in \mathcal{H}.
\end{equation}
\par\vskip 0.2 cm \noindent Recently, P.H.W.Hoffmann and M.Mackey
in $[20]$  introduced the concept of $(m,p)$-isometric tuples on
normed space. A tuple of commuting linear operators ${\bf\large T}
:= (T_1, ..., T_d)$ with $T_j : X \longrightarrow X $ (normed space)
is called an $(m, p)$-isometry (or an $(m, p)$-isometric tuple) if,
and only if, for given $m \in \mathbb{N}$ and $p \in (0, \infty)$,
\begin{equation}
\sum_{0\leq k\leq
m}(-1)^{m-k}\binom{m}{k}\sum_{|\alpha|=k}\frac{k!}{\alpha!}\|{\bf
\large  T}^{\alpha}x\|^p=0\;\;\hbox{for all }\;\; x \in X.
\end{equation}
 \begin{definition} Let
${\bf\large T}=(T_1,T_2,...,T_d)\in \mathcal{B}(\mathcal{H})^d$ be a
tuple of operators.\par\vskip 0.2 cm \noindent (1) If
$T_iT_j=T_jT_i\;\;\;1\leq i,\;j\leq d$, we say that ${\bf\large T}$
is a commuting tuple. \par\vskip 0.2 cm \noindent (2) If
$T_iT_j=T_jT_i$,\;$T_iT_j^*=T_j^*T_i\;\;\;1\leq i\not=j\leq d,$  we
say that ${\bf\large T}$ is a doubly commuting tuple.
\end{definition}

 \begin{definition}$([18])$ A commuting tuple
${\bf T}=(T_1,T_2,...,T_d) \in \mathcal{B}(\mathcal{H})^d$ is
called:
\par \vskip 0.2 cm \noindent (1) \; matricially  quasinormal if
$T_i$ commutes with $T_j^*T_k$ for all $i,j,k \in
\{\;1,2,...d\;\}.$\par \vskip 0.2 cm \noindent (2) \; jointly
quasinormal if $T_i$ commutes with $T_j^*T_j$ for all $i,j\in
\{\;1,2,...,d\;\}$ and
\par \vskip 0.2 cm \noindent (3)\; spherically
quasinormal if $T_j$ commutes with $|{\bf \large
T}|:=\bigg(\displaystyle\sum_{1\leq j\leq d}T_j^*T_j\bigg)$ for all
$j=1,2,...,d.$
\end{definition}
 \par\vskip 0.2 cm \noindent If
$\mathcal{M}$ is a common invariant subspace of $\mathcal{H}$ for
each $T_j\in \mathcal{B}(\mathcal{H}),$ then
$\displaystyle{\bf\large
T}_{|\mathcal{M}}=({T_1}_{|\mathcal{M}},{T_2}_{|\mathcal{M}},...,{T_d}_{|\mathcal{M}})$
denote an $d$-tuple of compressions of $\mathcal{M}.$\par\vskip 0.2
cm \noindent The contents of this paper are the following.
Introduction and terminologies are described in the first part. The
second part is devoted to the study of some basic properties of the
class of $(m;(q_1,...,q_d))$-partial isometries tuples. Several
spectral properties of some $(m;(q_1,...,q_d))$-partial isometries
are obtained in section three; concerning the joint point
spectrum,the joint approximate spectrum and the spectral radius. In
the fourth section we present some results concerning the  left
$m$-inverses and the right $m$- inverses for tuples of operators.

\begin{center}
\section{JOINT $(m;(q_1,...,q_d))$-PARTIAL ISOMETRIES $d$- TUPLE OF OPERATORS
}\end{center}

 In this Section, we  introduce and study some basic properties of an  joint $(m;(q_1,...,q_d))$-partial
isometry  operators tuples. All of these results are fairly
straightforward generalizations of the corresponding single variable
results that were proved in $[21]$ and $[24]$.\par\vskip 0.2 cm
\noindent
 The notion of  a joint $(m;(q_1,...,q_d))$-partial
isometry  is a natural higher dimensional generalization of the
notion of $(m,q)$- partial isometry.
\par \vskip 0.2 cm
\begin{definition} Given  $m \in \mathbb{N} $ and  $q=(q_1,q_2,...,q_d) \in \mathbb{Z}_+^d,$. An commuting operator  $d$-tuple
${\bf  \large T} \in \mathcal{B}(\mathcal{H})^d$ is called an joint
$(m;(q_1,...,q_d))$-partial isometry (or joint
$(m;(q_1,...,q_d))$-partial isometric
 $d$-tuple ) if and only if\begin{eqnarray*} {\bf
 \large T}^q\Bigg(\sum_{0\leq k\leq
m}(-1)^k\binom{m}{k}\sum_{|\alpha|=k}\frac{k!}{\alpha!}{\bf
 \large T}^{*\alpha}{\bf  \large T}^{\alpha}\Bigg)=0.
\end{eqnarray*}
\end{definition}
\begin{remark}\begin{enumerate}\item
Every $m$-isometric $d$-tuple of operators on $\mathcal{H}$ is a
joint $(m;(q_1,q_2,...,q_d))$-partial isometry $d$-tuple.
\item Every $(m;(q_1,q_2,...,q_d))$-partial isometry $d$-tuple of
operators ${\bf\large T}=(T_1,T_2,...,T_d)$ such that ${\bf\large
T}$ is entry-wise invertible,  ${\bf\large T}$ is an $m$-isometric
$d$-tuple.
\end{enumerate}
\end{remark}

\begin{remark}
 If $d=2$, let ${\bf \large
T}=(T_1,T_2) \in \mathcal{B}(\mathcal{H})^2 $ be a commuting
operator $2$-tuple, we have that
\par \vskip 0.2 cm \noindent (i)\;${\bf  \large T}$ is a joint $(1;(1,1))$-partial isometry  pair if
$$T_1T_2\bigg(I-T_1^*T_1-T_2^*T_2\bigg)=0.$$
\par \vskip 0.2 cm \noindent (ii)\;${\bf \large
T}$ is a joint $(2;(1,1))$-partial isometry  pair if
$$T_1T_2\bigg(I-2T_1^*T_1-2T_2^*T_2+T_1^{*2}T_1^2+T_2^{*2}T_2^2+2T_1^*T_2^*T_1T_2\bigg)=0.$$
\end{remark}
\begin{remark}
 Let ${\bf \large
T}=(T_1,T_2,...,T_d) \in \mathcal{B}(\mathcal{H})^d $ be a commuting
operator $d$-tuple.Then  ${\bf \large T}$ is an joint
$(1;(1,1,..,1))$-partial isometry if and only if
$$T_1...T_d\bigg(I-T_1^*T_1-T_2^*T_2-...-T_d^*T_d\bigg)=0.$$
\end{remark}
\begin{example} Consider
$T=\displaystyle\left( \begin{array}{ccc}
0&0&1\\
0&0&0\\
\frac{1}{\sqrt{2}}&\frac{1}{\sqrt{2}}&0\\
\end{array}
\right)\in \mathcal{B}(\mathbb{C}^3)$ and let  ${\bf \large
T}=\displaystyle\bigg(\frac{1}{\sqrt{d}}T,\frac{1}{\sqrt{d}}T,...,\frac{1}{\sqrt{d}}T\bigg)\in
\mathcal{B}(\mathcal{\mathbb{C}}^3)^d$. It is easy to see that ${\bf
\large T}$ is a joint $(1;(1,1,...,1))$-partial isometry $d$-tuple.
\end{example}
\begin{remark}
If   ${\bf \large T}=(T_1,T_2,...,T_d) \in
\mathcal{B}(\mathcal{H})^d $ be an doubly commuting $d$-tuple of
operators on $\mathcal{H}$.Then ${\bf\large T}$ is an joint
$(1;1,1,...,1)$-partial isometry if and only if ${\bf \large
T}^*:=(T_1^*,T_2^*,...,T_d^*)$ is so.
\end{remark}
The following example of a joint $(m; (q_1,...,q_d))$-partial
isometry is adopted form $[20].$
\begin{example}
Let  $S\in \mathcal{B}(\mathcal{H})$ be an $(m,q_1)$-partial
isometry operator,$d \in \mathbb{N}$ and
$\lambda=(\lambda_1,\lambda_2,...,\lambda_d)\in (\mathbb{C}^d,\;
\|.\|_2)$ with  $$\|\lambda\|_2^2=\sum_{1\leq j\leq
d}|\lambda_j|^2=1.$$ Then the operator tuple ${\bf\large
T}=(T_1,T_2,...,T_d)$ with $T_j=\lambda_jS$ for $j=1,2,..,d$ is an
joint $(m,(q_1,q_2,...,q_d))$-partial isometry $d$-tuple.\par \vskip
0.2 cm \noindent  In fact,it is clair that $T_iT_j=T_jT_i$ for all
$1\leq i;\;j\leq d.$ Further, by the multinomial expansion, we get
\begin{eqnarray*}\bigg(|\lambda_1|^2+|\lambda_2|^2+...+|\lambda_d|^2\bigg)^k
&=&\sum_{\alpha_1+\alpha_2+...+\alpha_d=k}\binom{k}{\alpha_1,\alpha_2,...,\alpha_d}\prod_{1\leq
i\leq d}|\lambda_i|^{2\alpha_i}
\\&=&\sum_{|\alpha|=j}\frac{k!}{\alpha!}|\lambda^\alpha|^2.\end{eqnarray*}
Thus, we have
\begin{eqnarray*}
{\bf \large T}^q\sum_{0\leq j\leq
m}(-1)^k\binom{m}{k}\sum_{|\alpha|=k}\frac{k!}{\alpha!}{\bf\large
T}^{*\alpha}{\bf\large T}^\alpha&=&{\bf \large T}^q\sum_{0\leq k\leq
m}(-1)^k\binom{m}{k}\sum_{|\alpha|=k}\frac{k!}{\alpha!}\prod_{1\leq
j \leq d}|\lambda|^{2\alpha_j}
S^{*|\alpha|}S^{|\alpha|}\\&=&\prod_{1\leq j\leq
d}\lambda_j^{q_j}S^{|q|}\sum_{0\leq k\leq
m}(-1)^k\binom{m}{k}S^{*k}S^{k}\\&=& \prod_{1\leq j\leq
d}\lambda_j^{q_j}S^{|q|-q_1}\underbrace{S^{q_1}\sum_{0\leq k\leq
m}(-1)^k\binom{m}{k}S^{*k}S^{k}}_{=0}\\&=&0.
\end{eqnarray*}
Consequently ${\bf \large T}$ is an joint $(m;
(q_1,...,q_d))$-partial isometry $d$-tuple as required.
\end{example}
The following example shows that the question about joint
$(m;(q_1,q_2,...,q_d))$-partial isometry for $d$-tuple is non
trivial.There exists a $d$-tuple  of commuting operators ${\bf
\large T}=(T_1,T_2,...,T_d) \in \mathcal{B}(\mathcal{H})^d$ such
that each $T_j$ is $(m,q_j)$-partial isometry for $j=1,2,...,d$, but
${\bf \large T}=(T_1,T_2,...,T_d)$ is not an joint
$(m;(q_1,q_2,...,q_d))$-partial isometry.
\begin{example} Let us consider $\mathcal{H}=\mathbb{C}^3$ and
define  $T_1=\left( \begin{array}{ccc}
0&i&0\\
0&0&i\\
i&0&0\\
\end{array}
\right) $ and $T_2=\left( \begin{array}{ccc}
1&0&0\\
0&1&0\\
0&0&1\\
\end{array}
\right).$ It is straightforward that $T_1$ and $T_2$
commute.Moreover,$T_1$ and $T_2$ are $(2;1)$-partial isometry but
$(T_1,T_2)$ is  not a $(2;(1,1))$-partial isometry.
\end{example}
\begin{lemma} Let $\mathbb{S}_d$ be the group of permutation on $d$ symbols $\{1,2,...,d\}$
and let  ${\bf\large T}=(T_1,T_2,...,T_d)\in
\mathcal{B}(\mathcal{H})^d$ be an $d$-tuple  of commuting operators.
If ${\bf\large T}$  is an joint $(m;(q_1,q_2,...,q_d))$-partial
isometry , then for every $\sigma \in \mathbb{S}_d$, ${\bf\large
T_{\sigma}}:=(T_{\sigma(1)},T_{\sigma(2)},...,T_{\sigma(d)})$ is an
joint $(m;(q_{\sigma(1)},q_{\sigma(2)},...,q_{\sigma(d)}))$-partial
isometry.
 \end{lemma}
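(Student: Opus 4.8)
The plan is to show that permuting the entries of the tuple leaves both factors of the defining identity literally unchanged, so that the quantity that must vanish for ${\bf \large T}_\sigma$ is the very same operator that vanishes for ${\bf \large T}$. Write
$$Q_m({\bf \large T}):=\sum_{0\leq k\leq m}(-1)^k\binom{m}{k}\sum_{|\alpha|=k}\frac{k!}{\alpha!}{\bf \large T}^{*\alpha}{\bf \large T}^{\alpha}$$
for the ``defect'' operator, so that the hypothesis reads ${\bf \large T}^q\,Q_m({\bf \large T})=0$. I claim that ${\bf \large T}_\sigma^{\,q_\sigma}=T_{\sigma(1)}^{q_{\sigma(1)}}\cdots T_{\sigma(d)}^{q_{\sigma(d)}}={\bf \large T}^q$ and that $Q_m({\bf \large T}_\sigma)=Q_m({\bf \large T})$. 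Granting these two equalities, the defining quantity for ${\bf \large T}_\sigma$ is exactly ${\bf \large T}^q\,Q_m({\bf \large T})=0$, which is the conclusion.

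The first claim is immediate from commutativity: as $j$ runs over $\{1,\dots,d\}$ the index $\sigma(j)$ runs over all of $\{1,\dots,d\}$, so $\prod_{j=1}^d T_{\sigma(j)}^{q_{\sigma(j)}}$ is merely a reordering of $\prod_{i=1}^d T_i^{q_i}$, and since the $T_i$ commute this reordering does not alter the product.

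For the second claim I would introduce, for each multi-index $\alpha=(\alpha_1,\dots,\alpha_d)$, its $\sigma$-shuffle $\alpha_\sigma$ defined by $(\alpha_\sigma)_i=\alpha_{\sigma^{-1}(i)}$. The map $\alpha\mapsto\alpha_\sigma$ is a bijection of $\mathbb{Z}_+^d$ onto itself preserving each level set $\{|\alpha|=k\}$, since $|\alpha_\sigma|=\sum_i\alpha_{\sigma^{-1}(i)}=|\alpha|$, and preserving the weights, since $\alpha_\sigma!=\prod_i\alpha_{\sigma^{-1}(i)}!=\alpha!$. Moreover ${\bf \large T}_\sigma^{\alpha}=\prod_{j=1}^d T_{\sigma(j)}^{\alpha_j}=\prod_{i=1}^d T_i^{(\alpha_\sigma)_i}={\bf \large T}^{\alpha_\sigma}$ (again reordering by commutativity of the $T_i$), and, passing to adjoints and using that the $T_i^*$ likewise commute, ${\bf \large T}_\sigma^{*\alpha}={\bf \large T}^{*\alpha_\sigma}$. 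Substituting these into $Q_m({\bf \large T}_\sigma)$ and re-indexing the inner sum by $\beta=\alpha_\sigma$ converts it term by term into $Q_m({\bf \large T})$.

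The whole argument is essentially bookkeeping, and the only place needing genuine care is this second claim: one must track $\sigma$ versus $\sigma^{-1}$ correctly in the definition of $\alpha_\sigma$, and one must invoke commutativity legitimately, namely separately within the adjoint block ${\bf \large T}^{*\alpha}$ and within the block ${\bf \large T}^{\alpha}$, never sliding a $T_i^*$ past a $T_j$. Since taking adjoints of $T_iT_j=T_jT_i$ yields $T_j^*T_i^*=T_i^*T_j^*$, both blocks may be reordered freely, so no hypothesis beyond plain commutativity is needed. With the two claims in hand the conclusion is immediate.
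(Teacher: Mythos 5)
Your proof is correct and takes essentially the same approach as the paper's (far terser) proof: both rest on the observation that, by commutativity, the prefactor ${\bf \large T}^q$ and the defect operator $\sum_{0\leq k\leq m}(-1)^k\binom{m}{k}\sum_{|\alpha|=k}\frac{k!}{\alpha!}{\bf \large T}^{*\alpha}{\bf \large T}^{\alpha}$ are unchanged when the entries of the tuple are permuted, so the defining identity for ${\bf \large T}_\sigma$ is literally the identity assumed for ${\bf \large T}$. Your write-up merely supplies the bookkeeping (the bijection $\alpha\mapsto\alpha_\sigma$ and the separate reordering of the adjoint and non-adjoint blocks) that the paper leaves implicit.
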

 \begin{proof} It follows from the condition that $\displaystyle\prod_{1\leq j\leq d}T_j=\prod_{1\leq j\leq d}T_{\sigma{(j)}}$ and the identity
 $$\prod_{1\leq j\leq d}T_j^{q_j}\sum_{0\leq k\leq m}(-1)^k\binom{m}{k}\sum_{|\alpha|=k}\frac{k!}{\alpha!}\prod_{1\leq j\leq d}T_j^{*\alpha_j}\prod_{1\leq j\leq d}T_j^{\alpha_j}=0.$$

 \end{proof}
\begin{theorem} Let $m\in \mathbb{N}$ and $q=(q_1,q_2,...,q_d)\in
\mathbb{Z}_+^d$. Let ${\bf \large T}=(T_1,T_2,...,T_d) \in
\mathcal{B}(\mathcal{H})^d$ be an commuting $d$-tuple operators such
that $\mathcal{N}(T^q)$ is a reducing subspace for $T_j$ for all
$j=1,2,...,d$. Then the following properties are equivalent.\par
\vskip 0.2 cm \noindent (1)\;${\bf T}$ is an joint
$(m;(q_1,...,q_d))$-partial isometry.\par \vskip .2 cm \noindent (2)
$$\sum_{0\leq k \leq m}(-1)^m\binom{m}{k}\sum_{|\alpha|=k}\frac{k!}{\alpha!}\|{\bf
T}^\alpha {\bf T}^{*q}x\|^2=0,\;\;\hbox{for all}\;x \in
\mathcal{H}.$$
\end{theorem}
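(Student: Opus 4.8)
The plan is to reformulate both conditions in terms of the single self-adjoint operator
$$A \;:=\; \sum_{0\leq k\leq m}(-1)^k\binom{m}{k}\sum_{|\alpha|=k}\frac{k!}{\alpha!}{\bf T}^{*\alpha}{\bf T}^{\alpha},$$
so that condition (1) reads simply ${\bf T}^q A=0$. First I would record that, because the $T_j$ commute, the adjoints $T_j^*$ commute as well, whence $({\bf T}^\alpha)^*={\bf T}^{*\alpha}$ and $({\bf T}^q)^*={\bf T}^{*q}$; consequently each summand ${\bf T}^{*\alpha}{\bf T}^{\alpha}$ is self-adjoint and so is $A$.

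Next I would translate the scalar expression in (2) into a quadratic form. Setting $y={\bf T}^{*q}x$ and using $\|{\bf T}^\alpha y\|^2=\langle {\bf T}^{*\alpha}{\bf T}^{\alpha}y,y\rangle$, the left-hand side of (2) equals $\langle A\,{\bf T}^{*q}x,\,{\bf T}^{*q}x\rangle=\langle {\bf T}^q A\,{\bf T}^{*q}x,\,x\rangle$ for every $x\in\mathcal{H}$. Since ${\bf T}^q A\,{\bf T}^{*q}$ is self-adjoint, the polarization identity in the complex Hilbert space $\mathcal{H}$ shows that (2) is equivalent to the operator identity ${\bf T}^q A\,{\bf T}^{*q}=0$. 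With this reformulation the implication (1)$\Rightarrow$(2) is immediate: multiplying ${\bf T}^q A=0$ on the right by ${\bf T}^{*q}$ gives ${\bf T}^q A\,{\bf T}^{*q}=0$.

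The substance of the theorem is the converse (2)$\Rightarrow$(1), and this is exactly where the hypothesis that $N:=\mathcal{N}({\bf T}^q)$ reduces each $T_j$ enters. I would write $\mathcal{H}=N\oplus N^\perp$; since $N$ reduces every $T_j$ (so it is invariant under each $T_j$ and each $T_j^*$), it reduces every ${\bf T}^\alpha$ and ${\bf T}^{*\alpha}$, hence reduces $A$, ${\bf T}^q$ and ${\bf T}^{*q}$, so all of these operators are block-diagonal for this decomposition. On the block $M:=N^\perp$ the restriction ${\bf T}^q|_M$ is injective, because $\mathcal{N}({\bf T}^q|_M)=M\cap\mathcal{N}({\bf T}^q)=M\cap N=\{0\}$, and $({\bf T}^q|_M)^*={\bf T}^{*q}|_M$ since $M$ reduces ${\bf T}^q$. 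Reading ${\bf T}^q A\,{\bf T}^{*q}=0$ on the block $M$ gives $({\bf T}^q|_M)\big((A|_M)({\bf T}^{*q}|_M)\big)=0$; injectivity of ${\bf T}^q|_M$ forces $(A|_M)({\bf T}^{*q}|_M)=0$, and taking adjoints (using $A|_M=(A|_M)^*$) yields $({\bf T}^q|_M)(A|_M)=0$. On the block $N$ the operator ${\bf T}^q$ vanishes identically, so reassembling the two blocks gives ${\bf T}^q A=0$, which is (1).

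I expect the converse implication to be the main obstacle. Without the reducing hypothesis one controls $A$ only on the range of ${\bf T}^{*q}$, which need not be dense, and the "sandwiched" identity ${\bf T}^q A\,{\bf T}^{*q}=0$ cannot be improved in general. It is precisely the injectivity of ${\bf T}^q$ on $N^\perp$, together with the block-diagonal splitting that the reducing property provides, that upgrades ${\bf T}^q A\,{\bf T}^{*q}=0$ back to ${\bf T}^q A=0$.
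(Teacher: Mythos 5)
Your proof is correct and takes essentially the same approach as the paper's: both arguments identify condition (2) with the operator identity ${\bf T}^{q}A\,{\bf T}^{*q}=0$ (with $A$ the self-adjoint defect operator) and then upgrade this to ${\bf T}^{q}A=0$ by splitting $\mathcal{H}=\mathcal{N}({\bf T}^{q})\oplus\mathcal{N}({\bf T}^{q})^{\perp}$ and invoking the reducing hypothesis. The only cosmetic difference lies in the cancellation on $\mathcal{N}({\bf T}^{q})^{\perp}$: the paper removes the right factor ${\bf T}^{*q}$ using $\overline{\mathcal{R}({\bf T}^{*q})}=\mathcal{N}({\bf T}^{q})^{\perp}$, whereas you remove the left factor ${\bf T}^{q}$ via its injectivity on that block and then take adjoints of the self-adjoint $A$ --- dual forms of the same step.
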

\begin{proof}
First , assume that ${\bf  \large T}$ is an joint
$(m;(q_1,q_2,...,q_d))$-partial isometry. We have that for all $x
\in \mathcal{H}$
\begin{eqnarray*}
&&{\bf T}^q\sum_{0\leq k \leq
m}(-1)^k\binom{m}{k}\sum_{|\alpha|=k}\frac{k!}{\alpha!}{\bf
T}^{*\alpha}{\bf T}^{\alpha}{\bf
T}^{*q}x=0\\
&\Longrightarrow& \langle  {\bf T}^q\sum_{0\leq k \leq
m}(-1)^k\binom{m}{k}\sum_{|\alpha|=k}\frac{k!}{\alpha!}{\bf
T}^{*\alpha}{\bf T}^{\alpha}{\bf
T}^{*q}x,\;x\rangle=0\\&\Longrightarrow& \sum_{0\leq k \leq
m}(-1)^k\binom{m}{k}\sum_{|\alpha|=k}\frac{k!}{\alpha!}\|{\bf
T}^{\alpha}{\bf T}^{*q}x\|^2=0.
\end{eqnarray*} Thus, (2) holds.\par\vskip 0.2cm\noindent  To prove
the converse, assume that the equality in (2) holds. It follows
that,
\begin{eqnarray*}
 &&\langle T^q\sum_{0\leq k \leq
m}(-1)^k\binom{m}{k}\sum_{|\alpha|=k}\frac{k!}{\alpha!}T^{*\alpha}T^{\alpha}T^{*q}x,\;x\rangle=0,\;\forall
\;x \in \mathcal{H}\\&\Longrightarrow& T^q\sum_{0\leq k \leq
m}(-1)^k\binom{m}{k}\sum_{|\alpha|=k}\frac{k!}{\alpha!}T^{*\alpha}T^{\alpha}T^{*q}x=0,\forall
\;x \in \mathcal{H}.
\end{eqnarray*}
Hence,  $$T^q\sum_{0\leq k \leq
m}(-1)^k\binom{m}{k}\sum_{|\alpha|=k}\frac{k!}{\alpha!}T^{*\alpha}T^{\alpha}=0
\;\;\hbox{on}\;\;
 \overline{\mathcal{R}(T^{*q})}=\mathcal{N}(T^{q})^\bot.$$ As
 $\mathcal{N}(T^q)$ is a reducing subspace for each $T_j$ $(1\leq j \leq d)$, we have that  $$T^q\displaystyle\sum_{0\leq k \leq
m}(-1)^k\binom{m}{k}\sum_{|\alpha|=k}\frac{k!}{\alpha!}T^{*\alpha}T^{\alpha}=0\;\;\hbox{on}\;\;
\mathcal{N}(T^q)$$ and hence, $$T^q\displaystyle\sum_{0\leq k \leq
m}(-1)^k\binom{m}{k}\sum_{|\alpha|=k}\frac{k!}{\alpha!}T^{*\alpha}T^{\alpha}=0.$$
\end{proof}
\noindent The following corollary is a immediate consequence of
Theorem 2.1.
\begin{corollary}
Let $m\in \mathbb{N}$ and $q=(q_1,q_2,...,q_d)\in \mathbb{Z}_+^d$.
Let ${\bf \large T}=(T_1,T_2,...,T_d) \in
\mathcal{B}(\mathcal{H})^d$ be an commuting $d$-tuple operators such
that $\mathcal{N}({\bf \large T}^q)$ is a reducing subspace for each
$T_j$, $1\leq j \leq d$. Then the following properties are
equivalent
\begin{enumerate}
\item ${\bf \large T}$ is an joint $(m;q_1,...,q_d))$-partial isometry.

\item$ {\bf \large T}|_{\mathcal{N}({\bf \large T}^q)^\perp}:=\bigg({\bf \large T}_1|_{\mathcal{N}({\bf \large T}^q)^\perp},T_2|_{\mathcal{N}({{\bf \large T}^q)^\perp}},...,T_d|_{\mathcal{N}({\bf \large T}^q)^\perp}\bigg)$ is an $m$-isometric tuple.
\end{enumerate}
\end{corollary}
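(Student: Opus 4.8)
The plan is to read the corollary directly off Theorem 2.1 by transporting its norm identity onto the subspace $\mathcal{N}(\mathbf{T}^q)^\perp$. Write $\mathcal{M}:=\mathcal{N}(\mathbf{T}^q)$ and $\mathbf{A}:=\mathbf{T}|_{\mathcal{M}^\perp}=\big(T_1|_{\mathcal{M}^\perp},\dots,T_d|_{\mathcal{M}^\perp}\big)$. Because $\mathcal{M}$ reduces each $T_j$, it is invariant under each $T_j$ and each $T_j^*$; consequently $\mathcal{M}^\perp=\overline{\mathcal{R}(\mathbf{T}^{*q})}$ is invariant under every $T_j$, so $\mathbf{A}$ is a genuine commuting $d$-tuple on $\mathcal{M}^\perp$ and $\mathbf{A}^\alpha=\mathbf{T}^\alpha|_{\mathcal{M}^\perp}$ for every multi-index $\alpha$. (Here I use $(\mathbf{T}^q)^*=\mathbf{T}^{*q}$, which holds because the $T_j$, and hence the $T_j^*$, commute.)

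First I would apply Theorem 2.1, which under the present reducing hypothesis makes (1) equivalent to
$$\sum_{0\leq k\leq m}(-1)^k\binom{m}{k}\sum_{|\alpha|=k}\frac{k!}{\alpha!}\|\mathbf{T}^\alpha\mathbf{T}^{*q}x\|^2=0\qquad\text{for all }x\in\mathcal{H}.$$
Given $x\in\mathcal{H}$, set $y:=\mathbf{T}^{*q}x$. Then $y\in\mathcal{R}(\mathbf{T}^{*q})\subseteq\mathcal{M}^\perp$, and invariance of $\mathcal{M}^\perp$ gives $\mathbf{T}^\alpha\mathbf{T}^{*q}x=\mathbf{A}^\alpha y$, so the displayed identity becomes
$$\sum_{0\leq k\leq m}(-1)^k\binom{m}{k}\sum_{|\alpha|=k}\frac{k!}{\alpha!}\|\mathbf{A}^\alpha y\|^2=0,$$
valid for every $y$ in the range of $\mathbf{T}^{*q}$.

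The key remaining step is to promote this identity from $\mathcal{R}(\mathbf{T}^{*q})$ to all of $\mathcal{M}^\perp$. The left-hand side is the continuous function $y\mapsto\langle Q_m(\mathbf{A})y,y\rangle$ attached to the bounded self-adjoint operator $Q_m(\mathbf{A}):=\sum_{0\leq k\leq m}(-1)^k\binom{m}{k}\sum_{|\alpha|=k}\frac{k!}{\alpha!}\mathbf{A}^{*\alpha}\mathbf{A}^\alpha$, and $\mathcal{R}(\mathbf{T}^{*q})$ is dense in $\mathcal{M}^\perp=\overline{\mathcal{R}(\mathbf{T}^{*q})}$; hence its vanishing on $\mathcal{R}(\mathbf{T}^{*q})$ forces it to vanish on all of $\mathcal{M}^\perp$. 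That is exactly the norm form of the defining equation of an $m$-isometric tuple for $\mathbf{A}$ on $\mathcal{M}^\perp$ (the global sign $(-1)^k$ versus $(-1)^{m-k}$ being immaterial for an expression set equal to zero), which proves (1)$\Rightarrow$(2). The converse runs the same computation backwards: if $\mathbf{A}$ is $m$-isometric the quadratic form vanishes throughout $\mathcal{M}^\perp$, in particular at every $y=\mathbf{T}^{*q}x$, restoring condition (2) of Theorem 2.1 and hence (1).

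The only genuinely delicate points are the two that rest on the reducing hypothesis: that $\mathcal{M}^\perp$ is $T_j$-invariant, so that $\mathbf{A}^\alpha y=\mathbf{T}^\alpha y$ and $\mathbf{A}$ is a legitimate tuple; and the density–continuity passage from $\mathcal{R}(\mathbf{T}^{*q})$ to its closure $\mathcal{M}^\perp$. Everything else is routine bookkeeping with the multi-index sums.
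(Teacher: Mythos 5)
Your proof is correct and follows the route the paper intends: the paper offers no separate argument, stating the corollary as an immediate consequence of Theorem 2.1, and your write-up simply makes that derivation explicit --- passing from the norm identity of Theorem 2.1 to the restricted tuple on $\mathcal{N}({\bf T}^q)^\perp=\overline{\mathcal{R}({\bf T}^{*q})}$ via invariance under each $T_j$ and the density--continuity argument. The points you flag (invariance of $\mathcal{N}({\bf T}^q)^\perp$, density of $\mathcal{R}({\bf T}^{*q})$, and the harmless global sign $(-1)^m$) are exactly the ones implicit in the paper's proof of Theorem 2.1, so there is nothing to add.
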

\begin{remark}
It easy to see that every $(m;( 1,1,...,1))$-partial isometry
$d$-tuple of commuting operators is an $(m; (q_1,...,q_d))$-partial
isometry $d$-tuple.
\end{remark}
In the following theorem  we show that by imposing certain
conditions on $(m;(q_1,...,q_d))$-partial isometry operator it
becomes $m$-partial isometry.

\begin{theorem}
If ${\bf T}=(T_1,T_2,...T_d)\in \mathcal{B}(\mathcal{H})^d$ is an
joint $(m;(q_1,...,q_d))$-partial isometry such that
$\mathcal{N}(T_j)=\mathcal{N}(T_j^2)$ for each $j$, $1\leq j\leq d$,
then ${\bf T}$ is an joint $(m;(1,...,1))$-partial isometry.
\end{theorem}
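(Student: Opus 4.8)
The plan is to strip everything down to a single kernel inclusion. Set
\[
\Delta \;:=\; \sum_{0\le k\le m}(-1)^k\binom{m}{k}\sum_{|\alpha|=k}\frac{k!}{\alpha!}{\bf T}^{*\alpha}{\bf T}^{\alpha},
\]
so that the hypothesis that ${\bf T}$ is a joint $(m;(q_1,\dots,q_d))$-partial isometry reads ${\bf T}^q\Delta=0$, equivalently $\mathcal{R}(\Delta)\subseteq\mathcal{N}({\bf T}^q)$, while the desired conclusion that ${\bf T}$ is a joint $(m;(1,\dots,1))$-partial isometry reads $T_1T_2\cdots T_d\,\Delta=0$, equivalently $\mathcal{R}(\Delta)\subseteq\mathcal{N}(T_1T_2\cdots T_d)$. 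Since the operator $\Delta$ occurs unchanged in both equations, no structural property of $\Delta$ is used; it therefore suffices to establish the inclusion $\mathcal{N}({\bf T}^q)\subseteq\mathcal{N}(T_1T_2\cdots T_d)$.

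First I would record the standard stabilization lemma for kernels: if $\mathcal{N}(T_j)=\mathcal{N}(T_j^2)$, then $\mathcal{N}(T_j^n)=\mathcal{N}(T_j)$ for all $n\ge 1$. One proves this by induction on $n$; assuming $\mathcal{N}(T_j^n)=\mathcal{N}(T_j)$ and taking $x\in\mathcal{N}(T_j^{n+1})$, one has $T_jx\in\mathcal{N}(T_j^n)=\mathcal{N}(T_j)$, so $T_j^2x=0$ and hence $x\in\mathcal{N}(T_j^2)=\mathcal{N}(T_j)$, the reverse inclusion being trivial. In particular $\mathcal{N}(T_j^{q_j})=\mathcal{N}(T_j)$ for every $j$ (the relevant case being $q_j\ge 1$).

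The core step is to remove the exponents from the product ${\bf T}^q$ one index at a time, using the commutativity of the tuple. Fixing $j$ and writing ${\bf T}^q=T_j^{q_j}B_j$ with $B_j$ the product of the remaining factors (which commutes with $T_j$), I would argue: if $T_j^{q_j}B_jx=0$ then $B_jx\in\mathcal{N}(T_j^{q_j})=\mathcal{N}(T_j)$ by the lemma, so $T_jB_jx=0$; conversely $T_jB_jx=0$ gives $T_j^{q_j}B_jx=T_j^{q_j-1}(T_jB_jx)=0$. Thus replacing $q_j$ by $1$ leaves the kernel unchanged, and iterating over $j=1,\dots,d$ yields $\mathcal{N}({\bf T}^q)=\mathcal{N}(T_1T_2\cdots T_d)$. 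Combined with $\mathcal{R}(\Delta)\subseteq\mathcal{N}({\bf T}^q)$ this gives $\mathcal{R}(\Delta)\subseteq\mathcal{N}(T_1\cdots T_d)$, i.e. $T_1\cdots T_d\,\Delta=0$, which is precisely the joint $(m;(1,\dots,1))$-partial isometry identity.

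The step I expect to be the only delicate one is the bookkeeping in this iterated reduction: at the $j$-th stage the already-treated factors carry exponent $1$ while the untreated ones still carry $q_i$, and one must check that the factor $B_j$ extracted at each stage genuinely commutes with $T_j$, which is guaranteed by the standing commutativity hypothesis on ${\bf T}$. Everything else is the routine passage between operator identities and range/kernel inclusions.
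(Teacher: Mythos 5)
Your proposal is correct and takes essentially the same approach as the paper: kernel stabilization ($\mathcal{N}(T_j)=\mathcal{N}(T_j^{n})$ for all $n\geq 1$) combined with commutativity to replace each exponent $q_j$ by $1$, the only difference being that the paper asserts this implication in a single step whereas you spell out the index-by-index iteration. One minor caveat: if some $q_j=0$, your claimed equality $\mathcal{N}({\bf T}^{q})=\mathcal{N}(T_1\cdots T_d)$ can fail, but the inclusion $\mathcal{N}({\bf T}^{q})\subseteq\mathcal{N}(T_1\cdots T_d)$, which is all your argument actually uses, holds trivially at such indices, so the proof stands.
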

\begin{proof} By the assumption we have  for $j=1,...,d$ that
$\mathcal{N}(T_j)=\mathcal{N}(T_j^n)$ for all positive integer $n$.
It follows that \begin{eqnarray*} {\bf T}^q\Bigg(\sum_{0\leq k \leq
m}(-1)^k\binom{m}{k}\sum_{|\alpha|=k}\frac{k!}{\alpha!}{\bf
T}^{*\alpha}{\bf T}^{\alpha } \Bigg)=0
\end{eqnarray*}
implies \begin{eqnarray*} \prod_{1\leq j\leq d}T_j\Bigg( \sum_{0\leq
k \leq m}(-1)^k\binom{m}{k}\sum_{|\alpha|=k}\frac{k!}{\alpha!}{\bf
T}^{*\alpha}{\bf T}^{\alpha}\Bigg)=0.
\end{eqnarray*}
\end{proof}
The following proposition generalized Proposition 3.1 in $[24]$.
\begin{proposition}
If ${\bf T}=(T_1,T_2,...T_d)\in \mathcal{B}(\mathcal{H})^d$ is a
jointly quasinormal and an joint $(m;(1,...,1))$-partial isometry,
then ${\bf \large T}$ is a joint $(1;(1,...,1))$-partial isometry.
\end{proposition}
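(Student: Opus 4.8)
The plan is to use joint quasinormality to collapse the bracketed sum in Definition 2.1 into a power of a single self-adjoint operator, and then pass from the $m$th power to the first power by a soft range/kernel argument. First I would extract the algebraic consequences of joint quasinormality. Put $A_j:=T_j^*T_j$. The hypothesis gives $T_iA_j=A_jT_i$ for all $i,j$; taking adjoints gives $T_i^*A_j=A_jT_i^*$, and these two facts together show that $A_1,\dots,A_d$ are mutually commuting self-adjoint operators that also commute with every $T_i$ and $T_i^*$. Taking $i=j$ shows each $T_j$ is quasinormal, so $T_j^{*n}T_j^{\,n}=A_j^{\,n}$ for all $n$ (an easy induction from $T_jA_j=A_jT_j$). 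Using the commutativity of the $T_j$ among themselves, I would then prove by induction on $d$ that
\[ {\bf T}^{*\alpha}{\bf T}^{\alpha}=\prod_{1\le j\le d}A_j^{\alpha_j}\qquad(\alpha\in\mathbb{Z}_+^d), \]
the inductive step being: commute $T_d^{\alpha_d}$ to the left past the other (commuting) unstarred factors so that it meets $T_d^{*\alpha_d}$, replace $T_d^{*\alpha_d}T_d^{\alpha_d}$ by $A_d^{\alpha_d}$, and push $A_d^{\alpha_d}$ to the front (it commutes with everything in sight), leaving the $(d-1)$-variable expression.

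With this identity in hand, the multinomial theorem (legitimate because the $A_j$ commute) gives $\sum_{|\alpha|=k}\frac{k!}{\alpha!}{\bf T}^{*\alpha}{\bf T}^{\alpha}=\big(\sum_{j}A_j\big)^{k}=A^{k}$, where $A:=\sum_{j}T_j^*T_j$. Hence the operator in the bracket of Definition 2.1 equals $\sum_{0\le k\le m}(-1)^k\binom{m}{k}A^k=(I-A)^m$. Writing $B:=I-A$ (self-adjoint) and $P:={\bf T}^q=T_1\cdots T_d$ with $q=(1,\dots,1)$, the assumption that ${\bf T}$ is a joint $(m;(1,\dots,1))$-partial isometry becomes exactly $PB^m=0$, whereas, by Remark 2.3, the conclusion to be proved is precisely $PB=0$.

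Finally I would pass from $PB^m=0$ to $PB=0$ using only that $B$ is self-adjoint. For self-adjoint $B$ one has $\mathcal{N}(B^m)=\mathcal{N}(B)$: if $B^mx=0$ then $\|B^{m-1}x\|^2=\langle B^{m-2}x,\,B^mx\rangle=0$, so $B^{m-1}x=0$, and one descends to $Bx=0$. Consequently $\overline{\mathcal{R}(B^m)}=\mathcal{N}(B^m)^\perp=\mathcal{N}(B)^\perp=\overline{\mathcal{R}(B)}$. Now $PB^m=0$ means $\mathcal{R}(B^m)\subseteq\mathcal{N}(P)$, and since $\mathcal{N}(P)$ is closed this yields $\overline{\mathcal{R}(B)}=\overline{\mathcal{R}(B^m)}\subseteq\mathcal{N}(P)$, that is $PB=0$, as desired.

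I expect the only delicate point to be the commutation bookkeeping behind ${\bf T}^{*\alpha}{\bf T}^{\alpha}=\prod_jA_j^{\alpha_j}$; the binomial and multinomial collapses and the concluding range/kernel argument are routine. It is worth noting that this last step needs nothing about $P$ beyond boundedness (not even that $P$ commutes with $B$), so the entire force of the quasinormality hypothesis is spent on the reduction of the defining sum to $PB^m=0$ with $B=I-\sum_jT_j^*T_j$ self-adjoint.
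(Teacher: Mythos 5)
Your proof is correct and takes essentially the same route as the paper's: joint quasinormality is used to collapse the defining sum into $T_1\cdots T_d\big(I-\sum_{1\le j\le d}T_j^*T_j\big)^m=0$, and then the exponent $m$ is reduced to $1$. The paper asserts both steps with no detail (calling the second a ``straightforward computation''), so your multinomial collapse via ${\bf T}^{*\alpha}{\bf T}^{\alpha}=\prod_j(T_j^*T_j)^{\alpha_j}$ and the kernel/range descent $\mathcal{N}(B^m)=\mathcal{N}(B)$ for the self-adjoint $B=I-\sum_j T_j^*T_j$ supply exactly the justifications the paper omits.
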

\begin{proof}
Since ${\bf T}=(T_1,T_2,...T_d)$ is a matricially quasinormal and an
joint $(m;(1,...,1))$-partial isometry, it follows that
$$ \prod_{1\leq j\leq d}T_j\Big(I-\sum_{1\leq j\leq d}T_j^*T_j\Big)^m=0.$$ A straightforward  computation using this
last equation yields that
$$ \prod_{1\leq j\leq d}T_j\Big(I-\sum_{1\leq j\leq d}T_j^*T_j\Big)=0.$$ The proof is complete.
\end{proof}
\begin{definition}

Let ${\bf\large T}=(T_1,T_2,...,T_d)$ and ${\bf\large
S}=(S_1,S_2,...,S_d)$ are two commuting $d$-tuple on of operators on
a common Hilbert space $\mathcal{H}$. We said that ${\bf\large S}$
is unitary equivalent to ${\bf\large T}$
 if there exists an unitary operator $V\in \mathcal{B}(\mathcal{H})$ such that
 $${\bf\large S}=(S_1,S_2,...,S_d)=(V^*T_1V,V^*T_2V,...,V^*T_dV).$$
\end{definition}
\begin{proposition}
Let ${\bf\large T}=(T_1,T_2,...,T_d)$ and ${\bf\large
S}=(S_1,S_2,...,S_d) \in \mathcal{B}(\mathcal{H})^d$
 are two commuting $d$-tuple of operators such that ${\bf \large S}$ is unitary equivalent to  ${\bf\large T}$ , then
 ${\bf\large T}$ is a joint $(m,(q_1,q_2,...,q_d)$-partial isometry if and only if   ${\bf\large S}$ is a joint $(m,(q_1,q_2,...,q_d)$-partial isometry.
\end{proposition}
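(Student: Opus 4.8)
The plan is to exploit the invertibility of the unitary intertwiner $V$ to conjugate the entire defining operator identity, so that the defining operator of one tuple is realised as a similarity transform of that of the other. First I would record the two elementary facts that carry the argument. Writing $S_j=V^*T_jV$ with $V$ unitary (so that $VV^*=V^*V=I$), each power satisfies $S_j^{\alpha_j}=(V^*T_jV)^{\alpha_j}=V^*T_j^{\alpha_j}V$, the inner factors $VV^*=I$ telescoping; stringing these together,
\begin{eqnarray*}
{\bf \large S}^\alpha=S_1^{\alpha_1}\cdots S_d^{\alpha_d}=V^*T_1^{\alpha_1}\cdots T_d^{\alpha_d}V=V^*{\bf \large T}^\alpha V,
\end{eqnarray*}
and, taking adjoints and using $(V^*)^*=V$, also ${\bf \large S}^{*\alpha}=V^*{\bf \large T}^{*\alpha}V$. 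In particular ${\bf \large S}^q=V^*{\bf \large T}^qV$.

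Next I would substitute these identities into the defining expression for ${\bf \large S}$. Inserting $VV^*=I$ between the two blocks gives ${\bf \large S}^{*\alpha}{\bf \large S}^\alpha=V^*{\bf \large T}^{*\alpha}{\bf \large T}^\alpha V$, so the whole double sum, and then the outer factor ${\bf \large S}^q$, factor as a single conjugation by $V$:
\begin{eqnarray*}
{\bf \large S}^q\sum_{0\leq k\leq m}(-1)^k\binom{m}{k}\sum_{|\alpha|=k}\frac{k!}{\alpha!}{\bf \large S}^{*\alpha}{\bf \large S}^\alpha=V^*\,{\bf \large T}^q\Bigg(\sum_{0\leq k\leq m}(-1)^k\binom{m}{k}\sum_{|\alpha|=k}\frac{k!}{\alpha!}{\bf \large T}^{*\alpha}{\bf \large T}^\alpha\Bigg)V,
\end{eqnarray*}
where the remaining $VV^*=I$ arising from ${\bf \large S}^q=V^*{\bf \large T}^qV$ cancels against the inner conjugation.

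Finally, since $V$ is invertible, the left-hand side vanishes if and only if the bracketed operator on the right vanishes; that is, ${\bf \large S}$ is a joint $(m;(q_1,\dots,q_d))$-partial isometry precisely when ${\bf \large T}$ is. The argument is symmetric, since $T_j=VS_jV^*$ exhibits ${\bf \large T}$ as the same type of transform of ${\bf \large S}$, so writing one direction suffices. I do not anticipate a genuine obstacle; the only point requiring care is verifying that the intermediate unitary factors cancel correctly, i.e.\ that the conjugation $X\mapsto V^*XV$ is both multiplicative and $*$-preserving on the words in $T_j,T_j^*$ occurring above — and it is precisely here that the unitarity of $V$, rather than mere invertibility, is used.
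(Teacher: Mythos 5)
Your proof is correct and follows essentially the same route as the paper: conjugating each $S_j^{\alpha_j}$, $S_j^{*\alpha_j}$ and $\mathbf{S}^q$ by the unitary $V$, telescoping the factors $VV^*=I$ so that the entire defining sum for $\mathbf{S}$ becomes $V^*\big(\cdot\big)V$ applied to the defining sum for $\mathbf{T}$, and concluding from the invertibility of $V$. Your write-up is in fact a bit more careful than the paper's (which contains a stray $V^*$ in its displayed identity and omits the final vanishing argument), but the underlying idea is identical.
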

\begin{proof} Suppose that ${\bf\large S}$ and ${\bf \large T}$ are unitary equivalent,that is there exists a unitary operator
 $V \in \mathcal{B}(\mathcal{H})$  such that $S_j=V^*T_jV\;\;(1\leq j\leq d)$. Since $T_iT_j=T_jT_i$; it follows that
$$(V^*T_jV)(V^*T_iV)=(V^*T_iV)(V^*T_jV)\; \;\hbox{for all}\;\; 1\leq i,j\leq d.$$\\Using the observations above, we get the following identity
\begin{eqnarray*}
{\bf\large S}^q\sum_{0\leq k\leq
m}(-1)^k\binom{m}{k}\sum_{|\alpha|=k}\frac{k!}{\alpha!}{\bf\large
S}^{*\alpha}{\bf\large S}^\alpha&=& V^*{\bf\large T}^qV\sum_{0\leq
k\leq
m}(-1)^k\binom{m}{k}\sum_{|\alpha|=k}\frac{k!}{\alpha!}V^*{\bf\large
T}^{*\alpha}{\bf\large T}^\alpha V\\&=& V^*\bigg({\bf\large
T}^q\sum_{0\leq k\leq
m}(-1)^k\binom{m}{k}\sum_{|\alpha|=k}\frac{k!}{\alpha!}V^*{\bf\large
T}^{*\alpha}{\bf\large T}^\alpha\bigg) V
\end{eqnarray*}
\end{proof}
In the proof of the following theorem , we need the following
formula \begin{remark} For $n,d,k_1,k_2,...,k_d \in \mathbb{N}$ with
$k_1+...+k_d=n$, $n\geq 1$ and $d\geq 2$, we have
$$\binom{n}{k_1...k_d}=\sum_{1\leq j\leq d}\binom{n-1}{k_1...k_j-1...k_d}.$$
\end{remark}
\begin{theorem}
Let ${\bf\large T}=(T_1,T_2,...,T_d\in \mathcal{B}({\cal H})^d$ be
an $(m;(q_1,q_2,...,q_d))$-partial isometry  $d$-tuple of operators
such that ${\cal N}({\bf\large T}^q)$
 is a reducing subspace for each $T_j$ for $1\leq j\leq d$. Then ${\bf \large T}$ is an
 $(m+n;(q_1,q_2,...,q_d))$-partial isometry $d$-tuple for $n \in \mathbb{N}$.
\end{theorem}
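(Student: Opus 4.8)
The plan is to reduce the statement to a one-step recursion for the defining operator and then to iterate, using the reducing hypothesis to keep the relevant range inside $\mathcal{N}({\bf T}^q)$. Write $\beta_k:=\sum_{|\alpha|=k}\frac{k!}{\alpha!}{\bf T}^{*\alpha}{\bf T}^{\alpha}$ and $\Lambda_m:=\sum_{0\le k\le m}(-1)^k\binom{m}{k}\beta_k$, so that the defining condition of an $(m;(q_1,\dots,q_d))$-partial isometry reads ${\bf T}^q\Lambda_m=0$, a relation plainly equivalent to $\mathcal{R}(\Lambda_m)\subseteq\mathcal{N}({\bf T}^q)$. I shall prove this range inclusion with $m$ replaced by $m+n$, by induction on $n\ge 0$; the base case $n=0$ is exactly the hypothesis.

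The key algebraic step I would establish first is the recursion
\[
\Lambda_{m+1}=\Lambda_m-\sum_{1\le j\le d}T_j^*\,\Lambda_m\,T_j .
\]
To obtain it, observe that the multinomial identity recalled in the Remark just before the statement rewrites the coefficient $\frac{k!}{\alpha!}$ as $\sum_{j:\,\alpha_j\ge 1}\frac{(k-1)!}{(\alpha-e_j)!}$, where $e_j$ is the $j$-th unit multi-index; inserting this into the definition of $\beta_k$ and using that the $T_j$, and hence the $T_j^*$, commute, the sum reorganizes as $\beta_k=\sum_{1\le j\le d}T_j^*\,\beta_{k-1}\,T_j$ for every $k\ge 1$. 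Therefore $\sum_{j}T_j^*\Lambda_m T_j=\sum_{k=0}^m(-1)^k\binom{m}{k}\beta_{k+1}$, and after reindexing this sum and collecting coefficients by Pascal's rule $\binom{m}{k}+\binom{m}{k-1}=\binom{m+1}{k}$ one recovers exactly $\Lambda_{m+1}$.

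With the recursion in hand, the inductive step is short. Assume $\mathcal{R}(\Lambda_{m+n})\subseteq\mathcal{N}({\bf T}^q)$. For arbitrary $x\in\mathcal{H}$ the recursion gives $\Lambda_{m+n+1}x=\Lambda_{m+n}x-\sum_{j}T_j^*\,\Lambda_{m+n}\,T_j x$. By the inductive hypothesis both $\Lambda_{m+n}x$ and $\Lambda_{m+n}T_j x$ lie in $\mathcal{N}({\bf T}^q)$; since $\mathcal{N}({\bf T}^q)$ reduces each $T_j$ it is in particular invariant under $T_j^*$, whence $T_j^*\,\Lambda_{m+n}\,T_j x\in\mathcal{N}({\bf T}^q)$ as well. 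Consequently $\Lambda_{m+n+1}x\in\mathcal{N}({\bf T}^q)$, i.e. $\mathcal{R}(\Lambda_{m+n+1})\subseteq\mathcal{N}({\bf T}^q)$, which is just ${\bf T}^q\Lambda_{m+n+1}=0$. This closes the induction and shows ${\bf T}$ is an $(m+n;(q_1,\dots,q_d))$-partial isometry for every $n$.

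The single genuine obstacle is the middle factor $T_j^*$ produced by the recursion: the identity ${\bf T}^q\Lambda_m=0$ does not allow one to commute ${\bf T}^q$ past $T_j^*$, and it is precisely here that the reducing hypothesis is used, through the invariance $T_j^*\mathcal{N}({\bf T}^q)\subseteq\mathcal{N}({\bf T}^q)$. I note an equivalent route: by Corollary 2.1 the restriction ${\bf T}|_{\mathcal{N}({\bf T}^q)^\bot}$ is $m$-isometric; writing $\mathcal{E}(S):=\sum_j T_j^* S T_j$, the same recursion shows its defining operator equals $(\mathrm{id}-\mathcal{E})^m$ applied to the identity, so it is annihilated by any further power, the restriction is therefore $(m+n)$-isometric, and Corollary 2.1 read backwards returns the conclusion. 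Either way the recursion is the crux.
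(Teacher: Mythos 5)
Your proof is correct, but it is not the route the paper takes. The paper reduces to the case $n=1$ and iterates, working throughout with the quadratic forms $\|{\bf T}^{\alpha}{\bf T}^{*q}x\|^{2}$ rather than with operators: it expands $\sum_{0\le k\le m+1}(-1)^{k}\binom{m+1}{k}\sum_{|\alpha|=k}\frac{k!}{\alpha!}\|{\bf T}^{\alpha}{\bf T}^{*q}x\|^{2}$ via Pascal's rule and the same multinomial identity you use, discards the resulting $m$-th order block because Theorem 2.1 makes it vanish, and recognizes the remainder as $-\sum_{j}$ of the $m$-th order form evaluated at $T_{j}{\bf T}^{*q}x$, which vanishes because (by Corollary 2.1 together with the reducing hypothesis) the restriction of ${\bf T}$ to $\mathcal{N}({\bf T}^{q})^{\perp}$ is an $m$-isometric tuple and that subspace is $T_{j}$-invariant; finally Theorem 2.1 is invoked again, in the converse direction, to pass from the scalar identity for $m+1$ back to the operator equation. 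In other words, the paper's argument is essentially the ``equivalent route'' you sketch at the end via Corollary 2.1, and it consumes the reducing hypothesis through that equivalence. Your main argument is cleaner and more self-contained: the operator recursion $\Lambda_{m+1}=\Lambda_{m}-\sum_{1\le j\le d}T_{j}^{*}\Lambda_{m}T_{j}$ (which, incidentally, is the specialization ${\bf S}={\bf T}^{*}$ of the paper's own Lemma 4.1 for $\beta_{m}({\bf S},{\bf T})$, proved there by the identical Pascal--multinomial computation) combined with induction on the range inclusion $\mathcal{R}(\Lambda_{m+n})\subseteq\mathcal{N}({\bf T}^{q})$ bypasses Theorem 2.1 entirely and isolates exactly which part of the hypothesis is needed, namely the $T_{j}^{*}$-invariance of $\mathcal{N}({\bf T}^{q})$. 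What the paper's approach buys is reuse of its previously established scalar characterization; what yours buys is a shorter, sharper proof whose only input is half of the reducing assumption, and which makes the role of that assumption transparent.
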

\begin{proof} To prove that ${\bf \large T}$ is an
 $(m+n;(q_1,q_2,...,q_d))$-partial isometry, it suffices to prove
 that ${\bf \large T}$ is an
 $(m+1;(q_1,q_2,...,q_d))$-partial isometry.\par\vskip .2 cm
 \noindent Indeed, we have
\begin{eqnarray*}
&&\sum_{0\leq k \leq
m+1}(-1)^k\binom{m+1}{k}\sum_{|\alpha|=k}\frac{k!}{\alpha!}\|{\bf
\large T}^\alpha {\bf \large T}^{*q}x\|^2\\&=&\|{\bf \large
T}^{*q}x\|^2\!+\!\sum_{1\leq k \leq m}(-1)^k\big[
\binom{m}{k}\!+\!\binom{m}{k-1}\big]\sum_{|\alpha|=k}\frac{k!}{\alpha!}\|{\bf
{\bf \large T}}^\alpha {\bf \large T}^{*q}x\|^2
\!-\!(-1)^m\sum_{|\alpha|=m+1}\frac{(m+1)!}{\alpha!}\|{\bf \large
T}^\alpha {\bf \large T}^{*q}x\|^2\\&=&\sum_{0\leq k \leq
m}(-1)^k\binom{m}{k}\sum_{|\alpha|-k}\frac{k!}{\alpha!}\|{\bf \large
T}^\alpha{\bf \large T}^{*q}x\|^2-\sum_{0\leq k\leq
m-1}(-1)^k\binom{m}{k}\sum_{|\alpha|=k+1}\frac{(k+1)!}{\alpha!}\|{\bf
\large T}^\alpha {\bf\large
T}^{*q}x\|^2\\&&-(-1)^m\sum_{|\alpha|=m+1}\frac{(m+1)!}{\alpha!}\|{\bf
\large T}^\alpha{\bf\large T}^{*q} x\|^2\\&=&-\sum_{0\leq k\leq
m-1}(-1)^k\binom{m}{k}\sum_{|\alpha|=k+1}\frac{k!(\alpha_1+...+\alpha_d)}{\alpha_1!.\alpha_2....\alpha_d!}\|{\bf
\large T}^\alpha {\bf\large
T}^{*q}x\|^2\\&&-(-1)^m\sum_{|\alpha|=m+1}\frac{m!(\alpha_1+...+\alpha_d)}{\alpha_1!.\alpha_2....\alpha_d!}\|{\bf
\large T}^\alpha{\bf\large T}^{*q} x\|^2\\&=&-\sum_{1\leq j \leq
d}\sum_{0\leq k\leq
m-1}(-1)^k\binom{m}{k}\sum_{|\alpha|=k+1}(-1)^k\binom{m}{k}\frac{k!\alpha_j}{\alpha_1!.\alpha_2!....\alpha_d!}\|T^{\alpha_1}...T_j^{\alpha_j-1}T_{j+1}^{\alpha_{j+1}}...T_d^{\alpha_d}T_j{\bf\large
T}^{*q}x\|^2
\\&&-(-1)^m\sum_{1\leq j
\leq
d}\sum_{|\alpha|=m+1}\frac{m!\alpha_j}{\alpha_1!.\alpha_2!....\alpha_d!}\|T^{\alpha_1}...T_j^{\alpha_j-1}T_{j+1}^{\alpha_{j+1}}...T_d^{\alpha_d}T_j{\bf\large
T}^{*q}x\|^2\\&=& -\sum_{1\leq j \leq d}\sum_{0\leq k\leq
m-1}(-1)^k\binom{m}{k}\sum_{|\beta|=k}(-1)^k\binom{m}{k}\frac{k!}{\beta!}\|{\bf
T}^{\beta}T_j{\bf\large T}^{*q}x\|^2
\\&&-(-1)^m\sum_{1\leq j
\leq d}\sum_{|\alpha|=m}\frac{m!}{\beta!}\|{\bf
T}^{\beta}T_j{\bf\large T}^{*q}x\|^2\\&=&-\sum_{1\leq j\leq
d}\sum_{0\leq k \leq
m}(-1)^k\binom{m}{k}\sum_{|\beta|=k}\frac{k1}{\beta!}\|{\bf\large
T}^\beta T_j{\bf\large T}^{*q}x\|^2\\&=&0.
\end{eqnarray*}
This completes the proof.
\end{proof}
\begin{proposition}  Let ${\bf \large T}=(T_1,...,T_d) \in \mathcal{B}(\mathcal{H})^d$ be an commuting  $d$-tuple of operators such that  $\mathcal{N}(T^q)$ is a reducing subspace for $T_j$ for all
$j=1,2,...,d$.
 If ${\bf \large T}$ is an joint  $(m+1;(q_1,q_2,...q_d))$-partial isometry
and  an joint  $(m;(q_1,q_2,...q_d))$-partial isometry on
$\displaystyle\bigcap_{1\leq j\leq d}\mathcal{R}(T_j)$,
  then ${\bf T}$ is a joint
$(m;(q_1,q_2,...q_d))$-partial isometry on $\mathcal{H}$.
\end{proposition}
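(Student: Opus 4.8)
The plan is to move everything to the ``defect form'' and then run the one--step recursion that is already implicit in the proof of Theorem~2.3. For $y\in\mathcal{H}$ write
\[
P_m(y):=\sum_{0\leq k\leq m}(-1)^k\binom{m}{k}\sum_{|\alpha|=k}\frac{k!}{\alpha!}\|{\bf T}^\alpha y\|^2 .
\]
Since $\mathcal{N}({\bf T}^q)$ reduces each $T_j$, Theorem~2.1 applies at level $m$ \emph{and} at level $m+1$ (the reducing hypothesis is the same for both), and tells us that ``${\bf T}$ is a joint $(m;(q_1,\dots,q_d))$-partial isometry'' is equivalent to $P_m({\bf T}^{*q}x)=0$ for every $x\in\mathcal{H}$, with the analogous statement at level $m+1$. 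Hence the whole proposition reduces to proving $P_m({\bf T}^{*q}x)=0$ for all $x$, while the first hypothesis already supplies $P_{m+1}({\bf T}^{*q}x)=0$ for all $x$.

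The engine is the identity, valid for any commuting tuple and any $y\in\mathcal{H}$,
\[
P_{m+1}(y)=P_m(y)-\sum_{1\leq j\leq d}P_m(T_jy).
\]
I would derive this exactly as in Theorem~2.3: split $\binom{m+1}{k}=\binom{m}{k}+\binom{m}{k-1}$, shift the summation index in the second piece, rewrite the resulting weight as $\frac{(l+1)!}{\alpha!}=\frac{l!\,(\alpha_1+\cdots+\alpha_d)}{\alpha!}$ on $|\alpha|=l+1$, and then distribute the factor $\alpha_1+\cdots+\alpha_d=|\alpha|$ over $j$ by means of the multinomial Pascal relation stated in the Remark preceding Theorem~2.3. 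Using commutativity in the form ${\bf T}^{\beta}T_j={\bf T}^{\beta+e_j}$ (with $e_j$ the $j$-th unit multi-index) one recognises the resulting expression as $\sum_{j}P_m(T_jy)$, which is precisely the combinatorial bookkeeping carried out in Theorem~2.3.

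With this identity in hand I would put $y={\bf T}^{*q}x$. The first hypothesis annihilates the left-hand side, so the recursion collapses to
\[
P_m({\bf T}^{*q}x)=\sum_{1\leq j\leq d}P_m\big(T_j{\bf T}^{*q}x\big).
\]
Because $\mathcal{N}({\bf T}^q)$ reduces each $T_j$, the vector ${\bf T}^{*q}x$ lies in $\mathcal{N}({\bf T}^q)^\perp=\overline{\mathcal{R}({\bf T}^{*q})}$, and each $T_j{\bf T}^{*q}x$ both stays in this reducing subspace and lies in $\mathcal{R}(T_j)$. The second hypothesis, that ${\bf T}$ is a joint $(m;(q_1,\dots,q_d))$-partial isometry on the ranges, is exactly what forces every summand $P_m(T_j{\bf T}^{*q}x)$ to vanish; hence $P_m({\bf T}^{*q}x)=0$ for all $x$, and a final appeal to Theorem~2.1 gives ${\bf T}^q\sum_{0\le k\le m}(-1)^k\binom{m}{k}\sum_{|\alpha|=k}\frac{k!}{\alpha!}{\bf T}^{*\alpha}{\bf T}^\alpha=0$, as required.

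The step I expect to be the genuine obstacle is the last one, namely locating the vectors $T_j{\bf T}^{*q}x$ inside the domain on which the second hypothesis is assumed: a priori $T_j{\bf T}^{*q}x$ lies only in the single range $\mathcal{R}(T_j)$ and not in $\bigcap_i\mathcal{R}(T_i)$, so one must fix the precise reading of ``joint $(m;\dots)$-partial isometry on $\bigcap_j\mathcal{R}(T_j)$'' so that it does deliver $P_m(T_j{\bf T}^{*q}x)=0$ for each $j$. The reducing property of $\mathcal{N}({\bf T}^q)$ is what keeps these vectors inside $\mathcal{N}({\bf T}^q)^\perp$, so that the defect form there coincides with $P_m$ (compare Corollary~2.1); making this subspace identification precise, rather than the combinatorial recursion, is where the care will be needed.
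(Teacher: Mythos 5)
Your proposal is essentially the paper's own proof: the paper's entire argument consists of exactly the identity you derive, namely $P_{m+1}({\bf T}^{*q}x)=P_m({\bf T}^{*q}x)-\sum_{1\leq j\leq d}P_m(T_j{\bf T}^{*q}x)$ in your notation (obtained by the same Pascal/multinomial bookkeeping as in Theorem 2.3), followed by the single phrase ``thus complete the proof by invoking Corollary 2.1,'' which corresponds to your two appeals to Theorem 2.1 at levels $m$ and $m+1$. The subtlety you flag at the end --- that the second hypothesis must be applied to the vectors $T_j{\bf T}^{*q}x$, which a priori lie only in $\mathcal{R}(T_j)$ and not in $\bigcap_{1\leq i\leq d}\mathcal{R}(T_i)$ --- is genuine, but the paper's proof passes over it in complete silence, so your write-up is, if anything, more careful than the original on precisely the point where both arguments are thinnest.
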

\begin{proof}
A simple computation shows that
\begin{eqnarray*}
&&\sum_{0\leq k\leq
m+1}(-1)^k\binom{m+1}{k}\sum_{|\alpha|=k}\frac{k!}{\alpha!}\|{\bf\large
T}^\alpha {\bf\large T}^{*q}x\|^2\\&=&\sum_{0\leq k\leq
m}(-1)^k\binom{m}{k}\sum_{|\alpha|=k}\frac{k!}{\alpha!}\|{\bf\large
T}^\alpha {\bf\large T}^{*q}x\|^2-\sum_{1\leq j\leq d}\sum_{0\leq
k\leq
m}(-1)^k\binom{m}{k}\sum_{|\alpha|=k}\frac{k!}{\alpha!}\|{\bf\large
T}^\alpha T_j{\bf\large T}^{*q}x\|^2.
\end{eqnarray*}
Thus complete the proof by invoking Corollary 2.1.
\end{proof}
\begin{proposition} Let ${\bf \large T}=(T_1,T_2,...,T_d) \in \mathcal{B}(\mathcal{H})^d$ be
an joint $(m;(q_1,q_2,...,q_d))$-partial isometry $d$-tuple. Then
${\bf\large T}$ is an joint $(m+1;(q_1,...,q_d))$-partial isometry
$d$-tuple if and if ${\bf\large T}$ satisfy the following identity
\begin{equation}\sum_{1\leq j\leq d}\sum_{0\leq k\leq
m}(-1)^k\binom{m}{k}\sum_{|\alpha|=k}\frac{k!}{\alpha!}\|{\bf\large
T}^\alpha T_j{\bf\large T}^{*q}x\|^2=0\;\;\hbox{for all}\;\;x \in
\mathcal{H}.\end{equation}
\end{proposition}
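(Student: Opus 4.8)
The plan is to reduce the whole statement to a single combinatorial recursion that was already carried out inside the proof of Proposition 2.3, combined with the quadratic-form characterization supplied by Theorem 2.1. For $x\in\mathcal{H}$ introduce the abbreviations
$$\Phi_r(x):=\sum_{0\leq k\leq r}(-1)^k\binom{r}{k}\sum_{|\alpha|=k}\frac{k!}{\alpha!}\|{\bf \large T}^\alpha {\bf \large T}^{*q}x\|^2,\qquad \Psi_m(x):=\sum_{1\leq j\leq d}\sum_{0\leq k\leq m}(-1)^k\binom{m}{k}\sum_{|\alpha|=k}\frac{k!}{\alpha!}\|{\bf \large T}^\alpha T_j {\bf \large T}^{*q}x\|^2,$$
so that the identity to be characterized is exactly $\Psi_m(x)=0$ for all $x$.

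First I would record the recursion established in the computation preceding this proposition (Pascal's rule $\binom{m+1}{k}=\binom{m}{k}+\binom{m}{k-1}$, followed by the multi-index reindexing that splits off a factor $T_j$ via $T_j^{\alpha_j}=T_j^{\alpha_j-1}T_j$ and converts the sum over $|\alpha|=k+1$ into a sum over $|\beta|=k$): namely
$$\Phi_{m+1}(x)=\Phi_m(x)-\Psi_m(x),\qquad x\in\mathcal{H}.$$
This is precisely the displayed identity proved in Proposition 2.3, so no fresh calculation is needed here; I would simply cite it.

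Next I would invoke Theorem 2.1 (equivalently Corollary 2.1) to translate the two partial-isometry conditions into the vanishing of the corresponding quadratic forms: ${\bf\large T}$ is a joint $(m;(q_1,\dots,q_d))$-partial isometry iff $\Phi_m\equiv 0$, and a joint $(m+1;(q_1,\dots,q_d))$-partial isometry iff $\Phi_{m+1}\equiv 0$. By the standing hypothesis ${\bf\large T}$ is already a joint $(m;(q_1,\dots,q_d))$-partial isometry, whence $\Phi_m(x)=0$ for every $x$. Substituting this into the recursion gives $\Phi_{m+1}(x)=-\Psi_m(x)$ for all $x$, so $\Phi_{m+1}\equiv 0$ holds if and only if $\Psi_m\equiv 0$ holds. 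Reading this back through Theorem 2.1 yields exactly the claimed equivalence, establishing both implications simultaneously.

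The only genuinely delicate point is the passage between the operator equation defining an $(m+1)$-partial isometry and the scalar identity $\Phi_{m+1}\equiv 0$: this is where Theorem 2.1, and hence the hypothesis that $\mathcal{N}({\bf\large T}^q)$ reduces each $T_j$, is indispensable, since without it the vanishing of the quadratic form $\langle A_{m+1}{\bf\large T}^{*q}x,{\bf\large T}^{*q}x\rangle$ only forces the operator $A_{m+1}:=\sum_{0\leq k\leq m+1}(-1)^k\binom{m+1}{k}\sum_{|\alpha|=k}\frac{k!}{\alpha!}{\bf\large T}^{*\alpha}{\bf\large T}^{\alpha}$ to vanish after precomposition with ${\bf\large T}^q$ on $\mathcal{N}({\bf\large T}^q)^\perp$. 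Everything else is a formal consequence of the recursion, so I would flag this hypothesis explicitly and otherwise let the two citations do the work.
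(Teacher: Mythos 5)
Your proof is correct under the hypothesis you flag, and at bottom it is the same Pascal-rule computation the paper runs; the difference is the level at which it is run, and that difference turns out to matter. Writing
\[
A_r:=\sum_{0\leq k\leq r}(-1)^k\binom{r}{k}\sum_{|\alpha|=k}\frac{k!}{\alpha!}{\bf T}^{*\alpha}{\bf T}^{\alpha},
\qquad
B:=\sum_{1\leq j\leq d}\sum_{0\leq k\leq m}(-1)^k\binom{m}{k}\sum_{|\beta|=k}\frac{k!}{\beta!}T_j^{*}{\bf T}^{*\beta}{\bf T}^{\beta}T_j,
\]
the paper works at the operator level: it proves the recursion ${\bf T}^qA_{m+1}={\bf T}^qA_m-{\bf T}^qB$ and then passes back and forth between the operator equation ${\bf T}^qB=0$ and the scalar identity (2.1). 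You instead prove the scalar recursion $\Phi_{m+1}=\Phi_m-\Psi_m$ (correctly cited from the computation in Proposition 2.3) and delegate all operator-to-scalar translation to Theorem 2.1. Your packaging exposes the crux of the whole proposition: identity (2.1) says precisely that the self-adjoint operator ${\bf T}^qB{\bf T}^{*q}$ vanishes, whereas the $(m+1;(q_1,\dots,q_d))$-condition requires the stronger statement ${\bf T}^qB=0$ (since ${\bf T}^qA_{m+1}=-{\bf T}^qB$ once ${\bf T}^qA_m=0$), and the two coincide only under an assumption such as $\mathcal{N}({\bf T}^q)$ reducing each $T_j$ --- exactly the hypothesis of Theorem 2.1.

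That hypothesis is absent from the proposition as stated, and the paper's own converse simply asserts that (2.1) yields $0={\bf T}^qB$, which is unjustified in general. So your appeal to Theorem 2.1 is not a detour but a repair: you have located a genuine gap in the paper's argument rather than introduced one of your own, and the reducing hypothesis should indeed be promoted into the statement, as you propose. One refinement: the hypothesis is needed only for the implication from (2.1) to the $(m+1;(q_1,\dots,q_d))$-condition. The forward implication holds unconditionally, because ${\bf T}^qA_r=0$ gives $A_r{\bf T}^{*q}=0$ by self-adjointness of $A_r$, hence $\Phi_r\equiv 0$; your symmetric use of the full equivalence in Theorem 2.1 makes that direction appear to need the reducing assumption when it does not.
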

\begin{proof}
Assume that $T$ is an joint $(m;(q_1,q_2,...,q_d)$-partial isometry
$d$-tuple and an $(m+1;(q_1,q_2,...,q_d)$-partial isometry
$d$-tuple. In this case, we get
\begin{eqnarray*}
0&=&{\bf \large T}^q\sum_{0\leq k\leq m+1}
(-1)^k\binom{m+1}{k}\sum_{|\alpha|=k}\frac{k!}{\alpha!}{\bf \large
T}^{*\alpha}{\bf\large T}^\alpha\\&=&{\bf \large T}^q\sum_{0\leq
k\leq m} (-1)^k\binom{m}{k}\sum_{|\alpha|=k}\frac{k!}{\alpha!}{\bf
\large T}^{*\alpha}{\bf\large T}^\alpha-{\bf \large T}^q\sum_{1\leq
j\leq d}\sum_{0\leq k\leq m}
(-1)^k\binom{m}{k}\sum_{|\beta|=k}\frac{k!}{\beta!}T^*_{j}{\bf
\large T}^{*\beta}{\bf\large T}^\beta T_j.
\end{eqnarray*}
Then, we obtain that  $${\bf \large T}^q\sum_{1\leq j\leq
d}\sum_{0\leq k\leq m}
(-1)^k\binom{m}{k}\sum_{|\beta|=k}\frac{k!}{\beta!}T^*_{j}{\bf
\large T}^{*\beta}{\bf\large T}^\beta T_j=0,$$ and hence
$$\sum_{1\leq
j\leq d}\sum_{0\leq k\leq m}
(-1)^k\binom{m}{k}\sum_{|\beta|=k}\frac{k!}{\beta!}\|{\bf\large
T}^\beta T_j{\bf \large T}^qx\|^2=0\;\;\;\hbox{for all}\;\;x\in
\mathcal{H}.$$ Conversely assume that ${\bf \large T}$ is an
$(m;q_1,q_2,...,q_d)$-partial isometry $d$-tuple satisfy
$(2.1)$.\par \vskip 0.2 cm \noindent From equation $(2.1)$ it
follows that
\begin{eqnarray*} 0&=&
{\bf \large T}^q \sum_{1\leq j\leq d}\sum_{0\leq k\leq
m}(-1)^k\binom{m}{k}\sum_{|\beta|=k}\frac{k!}{\beta!}T_j^{*}{\bf\large
T}^{*\beta}{\bf\large T}^{\beta}T_j\\&=&{\bf \large T}^q \sum_{1\leq
j\leq d}\sum_{0\leq k\leq
m}(-1)^k\binom{m}{k}\sum_{|\alpha|=k+1}\frac{k!.\alpha_j}{\alpha!}{\bf\large
T}^{*\alpha}{\bf\large T}^{\alpha}\\&=&{\bf \large T}^q \sum_{0\leq
k\leq
m}(-1)^k\binom{m}{k}\sum_{|\alpha|=k+1}\frac{(k+1)!}{\alpha!}{\bf\large
T}^{*\alpha}{\bf\large T}^{\alpha}.\end{eqnarray*} On the other
hand, we have that \begin{eqnarray*}&&{ \large\bf T}^q \sum_{0\leq
k\leq
m+1}(-1)^k\binom{m+1}{k}\sum_{|\alpha|=k}\frac{k!}{\alpha!}{\bf\large
T}^{*\alpha}{\bf\large T}^{\alpha}\\&=& {\bf\large T}^q \sum_{0\leq
k\leq
m}(-1)^k\binom{m}{k}\sum_{|\alpha|=k}\frac{k!}{\alpha!}{\bf\large
T}^{*\alpha}{\bf\large T}^{\alpha}-{\bf \large T}^q \sum_{0\leq
k\leq
m}(-1)^k\binom{m}{k}\sum_{|\alpha|=k+1}\frac{(k+1)!}{\alpha!}{\bf\large
T}^{*\alpha}{\bf\large T}^{\alpha}\\&=&0.\end{eqnarray*} The proof
is complete.
\end{proof}

\begin{center}
\section{SPECTRAL PROPERTIES OF A JOINT $(m;(q_1,...,q_d))$-PARTIAL ISOMETRIES
d-TUPLES}\end{center}

Spectral properties of commuting $d$-tuples received important
attention during last decades. Systematic investigations have been
carried out to extend known results for single operators to
commuting n-tuples. For more details, the interested reader is
referred to $[6],[10],[11], [12],[13],[14] ,[25], [27]$ and the
references therein.\par \vskip 0.2 cm \noindent First, we
recapitulate very briefly the following definitions.\par \vskip 0.2
cm
\begin{definition}
Let ${\bf \large T}=(T_1,T_2,...,T_d)$ be an $d$-tuple of operators
on a complex Hilbert space $\mathcal{H}$.\begin{enumerate}\item A
point $\lambda=(\lambda_1,\lambda_2,...,\lambda_d)\in \mathbb{C}^d$
is called a point eigenvalue of ${\bf T}$ if there exists a non zero
vector $x\in \mathcal{H}$ such that
$$(T_j-\lambda_j)x=0\;\;\hbox{for}\;\;j=1,2,...,d.$$ \noindent Or equivalently if
there exists a non-zero vector $x \in \mathcal{H}$such that $x \in
\displaystyle\bigcap_{1\leq j\leq
d}\mathcal{N}(T_j-\lambda_j)$,i.e.; $$\sigma_p({\bf\large
T})=\{\lambda \in \mathbb{C}^d:\;\bigcap_{1\leq j\leq
d}\mathcal{N}(T_j-\lambda_j)\not=\{0\}\}.$$
\item The joint point spectrum, denoted by $\sigma_p({\bf \large
T})$  of ${\bf \large T}$ is the set of all joint eigenvalues of
${\bf \large T}.$
\end{enumerate}
\end{definition}
\begin{definition}
For a commuting $d$-tuple ${\bf\large T}=(T_1,...,T_d) \in
\mathcal{B}(\mathcal{H})^d$. A number
$\lambda=(\lambda_1,\lambda_2,...,\lambda_d)\in \mathbb{C}^d$ is in
the joint approximate point spectrum $\sigma_{ap}({\bf \large T})$
if and only if there exists a sequence $(x_n)_n$ such that
$$(T_j -\lambda_j ) x_n \longrightarrow 0  \;\;\hbox{as}
\;\;n\longrightarrow \infty \;\;\hbox{for every }\;\;j=1,...,d.
$$\end{definition}
\begin{lemma}($[17]$)
Let ${\bf\large T}=(T_1,T_2,...,T_d)\in \mathcal{B}(\mathcal{H})^d$
be a commuting tuples of bounded operators. Then
$$\sigma_{ap}({\bf\large T})=\bigg\{\lambda=(\lambda_1,\lambda_2,...,\lambda_d)\in \mathbb{C}^d: \;\exists\;(x_n)_n\subset \mathcal{H}\;\;\hbox{such that}\;\;
\\\lim_{n\longrightarrow \infty}\sum_{1\leq j\leq
d}\|(T_j-\lambda_j)x_n\|=0\bigg\}.$$
\end{lemma}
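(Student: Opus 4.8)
The plan is to establish the set equality by a direct double inclusion, exploiting the single structural fact that the joint approximate point spectrum involves a \emph{finite} number $d$ of nonnegative quantities $\|(T_j-\lambda_j)x_n\|$. By Definition 3.2, membership $\lambda\in\sigma_{ap}({\bf\large T})$ means precisely that there is a (normalized) sequence $(x_n)_n\subset\mathcal{H}$ with $(T_j-\lambda_j)x_n\longrightarrow 0$ as $n\to\infty$ for each $j=1,\dots,d$, equivalently $\|(T_j-\lambda_j)x_n\|\longrightarrow 0$ for each $j$. The whole lemma thus reduces to the elementary equivalence: a finite family of nonnegative real sequences tends to $0$ individually if and only if their sum tends to $0$.

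For the inclusion $\subseteq$, I would take $\lambda\in\sigma_{ap}({\bf\large T})$ and fix the witnessing sequence $(x_n)_n$ from Definition 3.2, so that $\|(T_j-\lambda_j)x_n\|\to 0$ for every $j$. Since a finite sum of sequences each converging to $0$ converges to $0$, I obtain $\lim_{n\to\infty}\sum_{1\leq j\leq d}\|(T_j-\lambda_j)x_n\|=0$, placing $\lambda$ in the right-hand set. For the reverse inclusion $\supseteq$, I would start from a sequence $(x_n)_n$ with $\sum_{1\leq j\leq d}\|(T_j-\lambda_j)x_n\|\to 0$ and fix an arbitrary index $j_0\in\{1,\dots,d\}$. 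Nonnegativity of norms gives the squeeze estimate
\[
0\;\leq\;\|(T_{j_0}-\lambda_{j_0})x_n\|\;\leq\;\sum_{1\leq j\leq d}\|(T_j-\lambda_j)x_n\|,
\]
whose right-hand side tends to $0$; hence $\|(T_{j_0}-\lambda_{j_0})x_n\|\to 0$. As $j_0$ was arbitrary, the same sequence $(x_n)_n$ witnesses $(T_j-\lambda_j)x_n\to 0$ for every $j$, so $\lambda\in\sigma_{ap}({\bf\large T})$ by Definition 3.2.

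There is no genuine analytic obstacle here; the only point requiring care is that the equivalence of ``each term $\to 0$'' with ``the sum $\to 0$'' relies essentially on the finiteness of $d$ (the same statement would fail for an infinite family), and that the sequence $(x_n)_n$ is understood to consist of unit vectors so that the characterization is nontrivial. The key step is simply the two-sided bound displayed above, and it is immediate.
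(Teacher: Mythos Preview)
Your argument is correct: the equivalence is exactly the elementary fact that a finite sum of nonnegative sequences tends to $0$ if and only if each summand does, together with the squeeze estimate you display. The paper itself does not prove this lemma; it merely quotes it from $[17]$, so there is no proof in the paper to compare against. Your remark that the sequence $(x_n)_n$ must be understood as normalized (otherwise both descriptions are vacuously satisfied by $x_n=0$) is well taken and in fact clarifies an omission in the paper's Definition~3.2 and in the statement of the lemma.
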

\begin{definition}($[27]$)
The Taylor spectrum  of commuting d-tuple ${\bf
\large}=(T_1,...,T_d)\in \mathcal{B}(\mathcal{H})^d$ is the set  of
all complex $d$-tuple $\lambda=(\lambda_1,...,\lambda_d)\in
\mathbb{C}^d$ with the property that the translated $d$-tuple
$(T_1-\lambda_1,...,T_d-\lambda_d)$ is note invertible.The symbol
$\sigma({\bf  \large T})$ will stand for the Taylor spectrum of
${\bf \large T}$.
\end{definition}
\begin{remark}( $[27]$ )
Let ${\bf \large T}=(T_1,T_2,...,T_d)\in \mathcal{B}(\mathcal{H})^d$
de an $d$-tuple of commuting operators on $\mathcal{H}.$
$(\lambda_1,\lambda_2,...,\lambda_d)\notin \sigma({\bf \large T})$
if there exist operators $U_1,...,U_d,V_1,...,V_d \in
\mathcal{B}(\mathcal{H}))$ such that
$$\sum_{1\leq k \leq d}U_k(T_k-\lambda_k I)=I\;\hbox{and}\;\;
\sum_{1\leq k \leq d}(T_k-\lambda_k I)V_k =I.$$ The spectral radius
of ${\bf\large T}$ is $$r({\bf\large T})=\max\{\|\lambda\|_2,\;\;
{\lambda \in \sigma({\bf\large T})}\}$$ where
$\|\lambda\|_2=\bigg(\displaystyle\sum_{1\leq j\leq
d}|\lambda_j|^2\bigg)^2.$
\end{remark}
\begin{proposition}($[23]$,\;Lemma 3.1.1)
Let ${\bf\large T}=(T_1,T_2,...,T_d)\in \mathcal{B}(\mathcal{H})^d$
be a commuting tuples of bounded operators. Then the following ate
equivalent
\par\vskip .2 cm\noindent (1)\;There exists $\delta>0$,such that $\|T_1x\|+...+\|T_dx\|\geq \delta\|x\|
$ for all $x\in \mathcal{H},$ \par\vskip .2 cm\noindent (2)\; There
exists ${\bf\large S}=(S_1,...,S_d)\in \mathcal{H}(\mathcal{H})^d$
such that $S_1T_1+ S_2T_2+...+S_dT_d=I_{\mathcal{H}}.$ \par\vskip .2
cm\noindent (3)\; There is no sequence $(x_n)_n\subset
\mathcal{H}$:$\|x_n\|=1$ such that
$\displaystyle\lim_{n\longrightarrow \infty}\|T_jx_n\|=0$ for
$j=1,2,...,d.$

\end{proposition}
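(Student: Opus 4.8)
The plan is to prove the three conditions mutually equivalent by handling the two ``easy'' implications $(2)\Rightarrow(1)$ and $(1)\Leftrightarrow(3)$ directly, and then spending the bulk of the effort on the one substantive implication $(1)\Rightarrow(2)$. Throughout I equip $\mathcal{H}^d$ with its Hilbert space norm $\|(y_1,\dots,y_d)\|^2=\sum_{1\leq j\leq d}\|y_j\|^2$, and I note at the outset that commutativity of the $T_j$ is not actually used anywhere in the argument.

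First I would dispose of $(2)\Rightarrow(1)$. Assuming $S_1T_1+\cdots+S_dT_d=I$, set $M=\max_{1\leq j\leq d}\|S_j\|$, which is strictly positive (otherwise $I=0$). For every $x\in\mathcal{H}$ the triangle inequality gives $\|x\|=\big\|\sum_{j}S_jT_jx\big\|\leq\sum_{j}\|S_j\|\,\|T_jx\|\leq M\sum_{j}\|T_jx\|$, so $(1)$ holds with $\delta=1/M$. Next I would run $(1)\Leftrightarrow(3)$ on the contrapositive by a routine normalization: if $(1)$ fails then for each $n$ there is a unit vector $x_n$ with $\sum_{j}\|T_jx_n\|<1/n$, whence $\|T_jx_n\|\to0$ for every $j$, violating $(3)$; conversely a sequence as in the negation of $(3)$ directly contradicts the lower bound $\sum_{j}\|T_jx_n\|\geq\delta\|x_n\|=\delta$ in $(1)$.

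The heart of the proof is $(1)\Rightarrow(2)$. I would pass to the column operator $C\colon\mathcal{H}\to\mathcal{H}^d$ defined by $Cx=(T_1x,\dots,T_dx)$. Since $\sqrt{\sum_{j}a_j^2}\leq\sum_{j}a_j\leq\sqrt{d}\,\sqrt{\sum_{j}a_j^2}$ for nonnegative reals $a_j$, hypothesis $(1)$ is equivalent to $C$ being bounded below, that is $\|Cx\|\geq\delta'\|x\|$ for some $\delta'>0$. The adjoint is $C^*(y_1,\dots,y_d)=\sum_{j}T_j^*y_j$, so $C^*C=\sum_{j}T_j^*T_j$ and $\langle C^*Cx,x\rangle=\|Cx\|^2\geq\delta'^2\|x\|^2$ for all $x$. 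Hence the positive self-adjoint operator $A:=\sum_{1\leq j\leq d}T_j^*T_j$ is bounded below, and a bounded-below positive self-adjoint operator is invertible in $\mathcal{B}(\mathcal{H})$. Setting $S_j:=A^{-1}T_j^*$ then yields $\sum_{j}S_jT_j=A^{-1}\sum_{j}T_j^*T_j=A^{-1}A=I$, which is exactly $(2)$.

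The hard part will be the invertibility step inside $(1)\Rightarrow(2)$: one must justify that the $\ell^1$-type estimate in $(1)$ genuinely upgrades to the $\ell^2$ bounded-below condition for $C$ (this is the elementary inequality above), and then that a positive self-adjoint operator with $\langle Ax,x\rangle\geq\delta'^2\|x\|^2$ is invertible, which follows because $A$ is injective with closed range, hence has dense and closed range, hence is surjective with bounded inverse. Once these two observations are in place the construction of the explicit left inverse $S_j=A^{-1}T_j^*$ is immediate, completing the cycle.
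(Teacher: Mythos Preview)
Your argument is correct and complete. The implications $(2)\Rightarrow(1)$ and $(1)\Leftrightarrow(3)$ are handled by the obvious estimates, and for $(1)\Rightarrow(2)$ your use of the column operator $C$ together with the invertibility of the positive operator $A=C^*C=\sum_j T_j^*T_j$ yields the explicit left inverse $S_j=A^{-1}T_j^*$, which is the standard construction. Your remark that commutativity is never used is also accurate.

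As for comparison with the paper: the paper does not actually prove this proposition. It is quoted verbatim as Lemma~3.1.1 from reference~[23] (Ptak) and used as a black box in the proofs of Proposition~3.2 and Theorem~4.1. So there is no ``paper's own proof'' to set your argument against; you have supplied what the paper omits.
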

\par \vskip 0.2 cm
\noindent In the following results we examine some spectral
properties of a joint $(m;(q_1,...,q_d))$-partial isometries.That
extend the case of single variable $m$-partial isometries studied in
$[24].$\par\vskip 0.2 cm  \noindent We put
$$\mathbb{B}(\mathbb{C}^d):=\{ \lambda=(\lambda_1,...,\lambda_d)\in
\mathbb{C}^d \;/ \; \|\lambda\|_2=\bigg(\sum_{1\leq j \leq
d}|\lambda_j|^2\bigg)^{\frac{1}{2}}<1\;\}$$  and
$$\partial\mathbb{B}(\mathbb{C}^d):=\{
\lambda=(\lambda_1,...,\lambda_d)\in \mathbb{C}^d \;/ \;
\|\lambda\|_2 =\bigg(\sum_{1\leq j \leq
d}|\lambda_j|^2\bigg)^{\frac{1}{2}} =1\;\}$$\par\vskip 0.2 cm
\noindent

In ( $[17]$, Lemma 3.2), the authors proved that If ${\bf \large T}$
is a $m$-isometric tuple, then the joint approximate point spectrum
of $T$ is in the boundary of the unit ball
$\mathbb{B}(\mathbb{C}^d)$.
 This is not true
for an joint $(m;(q_1,q_2,...,q_d))$-partial isometry tuple. For
example, on $\mathcal{B}(\mathbb{C}^2)^d$  the operator ${\bf \large
T}=(T,0,...,0) $ where $T$ is the matrix operator $T = \left(
                                                     \begin{array}{cc}
                                                       a & 0 \\
                                                       1 & 0 \\
                                                     \end{array}
                                                   \right)$
with $ \displaystyle |a|^2 = \frac{1+\sqrt{5}}{2}$ is an
$(2;(1,0,0,...,0))$-partial isometry.It is clear that with
$\sigma({\bf \large S})= \{0, a\}\times\{0\}\times...\times \{0\}.$
\par \vskip 0.2 cm \noindent
However, if in addition assume that $T_j$ reduces
$\mathcal{N}({\bf\large T^q})$ for $1\leq j\leq d$, we obtain the
following result.

\begin{theorem}
Let ${\bf\large  T}=(T_1,T_2,...,T_d) \in
\mathcal{B}(\mathcal{H})^d$ be an joint $(m;(q_1,...,q_d))$-partial
isometry of $d$-tuple of operators such that $\mathcal{N}({\bf
\large T^q})$ is a reducing subspace for each  $T_j$  $( 1\leq j\leq
d)$. Then $\sigma_{ap}({\bf \large T}) \subset
\partial \mathbb{B}(\mathbb{C}^d )\displaystyle\cup \big[0\big]$ where $$\big[0
\big]:=
 \{ (\lambda_1,\lambda_2,...,\lambda_d) \in
\mathbb{C}^d:\;\prod_{1\leq k\leq d}\lambda_k=0\}.$$
\end{theorem}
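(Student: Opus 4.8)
The plan is to take an arbitrary point $\lambda=(\lambda_1,\dots,\lambda_d)\in\sigma_{ap}({\bf\large T})$ and show that either $\prod_{1\leq k\leq d}\lambda_k=0$ or $\|\lambda\|_2=1$, which is exactly the claim $\sigma_{ap}({\bf\large T})\subset\partial\mathbb{B}(\mathbb{C}^d)\cup[0]$. By Lemma 3.1 there is a sequence $(x_n)_n$ of unit vectors with $\|(T_j-\lambda_j)x_n\|\to 0$ for every $j$. The key consequence I would extract is that for any multi-index $\alpha$ one has ${\bf\large T}^\alpha x_n-\lambda^\alpha x_n\to 0$; this follows by a standard telescoping argument, writing ${\bf\large T}^\alpha-\lambda^\alpha$ as a sum of terms each containing a factor $(T_j-\lambda_j)$ (using commutativity to arrange the factors), so that $\|{\bf\large T}^\alpha x_n-\lambda^\alpha x_n\|\to 0$ and hence $\|{\bf\large T}^\alpha x_n\|\to|\lambda^\alpha|$.

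Next I would invoke the reducing-subspace hypothesis to pass to the quadratic-form description of the partial isometry condition. Since $\mathcal{N}({\bf\large T}^q)$ reduces each $T_j$, Theorem 2.1 and Corollary 2.1 apply, so the restriction of ${\bf\large T}$ to $\mathcal{N}({\bf\large T}^q)^\perp$ is an $m$-isometric tuple; equivalently, the identity
\begin{eqnarray*}
\sum_{0\leq k\leq m}(-1)^k\binom{m}{k}\sum_{|\alpha|=k}\frac{k!}{\alpha!}\|{\bf\large T}^\alpha y\|^2=0
\end{eqnarray*}
holds for all $y\in\mathcal{N}({\bf\large T}^q)^\perp=\overline{\mathcal{R}({\bf\large T}^{*q})}$. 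The plan is to apply this with $y$ replaced by suitable vectors built from the $x_n$. The cleanest route is to distinguish two cases according to whether $\prod_k\lambda_k=0$.

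If $\prod_{1\leq k\leq d}\lambda_k=0$ then $\lambda\in[0]$ and there is nothing to prove. So I assume $\prod_k\lambda_k\neq 0$, i.e.\ every $\lambda_j\neq 0$, and aim to force $\|\lambda\|_2=1$. Using that ${\bf\large T}^{*q}x_n$ behaves like $\overline{\lambda}^{\,q}x_n$ (the same telescoping argument applied to the adjoints, noting $\lambda^q\neq 0$ so these vectors do not collapse to zero in norm), I would substitute the approximate eigenvectors into the $m$-isometric quadratic identity on $\mathcal{N}({\bf\large T}^q)^\perp$. Passing to the limit and using $\|{\bf\large T}^\alpha x_n\|^2\to|\lambda^\alpha|^2=\prod_j|\lambda_j|^{2\alpha_j}$ together with the multinomial expansion
\begin{eqnarray*}
\sum_{0\leq k\leq m}(-1)^k\binom{m}{k}\Big(\sum_{1\leq j\leq d}|\lambda_j|^2\Big)^k=\big(1-\|\lambda\|_2^2\big)^m,
\end{eqnarray*}
the limiting identity reduces to $|\lambda^q|^2\big(1-\|\lambda\|_2^2\big)^m=0$. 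Since $\lambda^q\neq 0$, this yields $\|\lambda\|_2=1$, placing $\lambda$ in $\partial\mathbb{B}(\mathbb{C}^d)$.

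The main obstacle is the careful handling of the sequence when $\lambda\in\mathcal{N}({\bf\large T}^q)^\perp$ is not guaranteed: the $m$-isometric identity is only available on $\overline{\mathcal{R}({\bf\large T}^{*q})}$, so I must verify that the relevant approximate eigenvectors (or their projections) lie in, or converge into, this reducing subspace, which is precisely where the hypothesis that $\mathcal{N}({\bf\large T}^q)$ reduces each $T_j$ is used to keep the splitting $\mathcal{H}=\mathcal{N}({\bf\large T}^q)\oplus\mathcal{N}({\bf\large T}^q)^\perp$ compatible with the action of the tuple. Ensuring the limit may be taken term by term in the finite sum and that the multinomial collapse is applied correctly is the technical heart; once the approximate eigenvectors are shown to respect the reduction, the algebraic collapse to $(1-\|\lambda\|_2^2)^m=0$ is routine.
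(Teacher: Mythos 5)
Your outline follows the paper's strategy (forward telescoping, Corollary 2.1 on $\mathcal{N}({\bf T}^q)^\perp$, multinomial collapse to $(1-\|\lambda\|_2^2)^m$), but it contains one step that genuinely fails: the claim that ${\bf T}^{*q}x_n$ behaves like $\overline{\lambda}^{\,q}x_n$ ``by the same telescoping argument applied to the adjoints.'' Telescoping for the adjoints would require $(T_j^*-\overline{\lambda_j})x_n\to 0$, and this does \emph{not} follow from $(T_j-\lambda_j)x_n\to 0$: approximate eigenvectors of an operator are not approximate eigenvectors of its adjoint unless the operator is normal or some substitute structure is available. In this paper that implication (for $\lambda\notin[0]$) is exactly Proposition 3.2(1), which is stated and proved \emph{after} Theorem 3.1 and whose proof itself relies on the partial-isometry identity together with Proposition 3.1; so as written your argument either invokes a downstream nontrivial result or a false general fact. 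The gap is not cosmetic: without adjoint control you cannot evaluate $\lim_n\|{\bf T}^\alpha{\bf T}^{*q}x_n\|^2$ at all, since ${\bf T}^\alpha$ and ${\bf T}^{*q}$ need not commute, and this limit is the input to your multinomial collapse.

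The repair is small and turns your proof into essentially the paper's. Test the $m$-isometric quadratic identity on $y_n={\bf T}^q x_n$ instead of ${\bf T}^{*q}x_n$. The reducing hypothesis is what guarantees $\mathcal{R}({\bf T}^q)\subset\mathcal{N}({\bf T}^q)^\perp$ (each $T_j$ leaves both summands of $\mathcal{H}=\mathcal{N}({\bf T}^q)\oplus\mathcal{N}({\bf T}^q)^\perp$ invariant, and ${\bf T}^q$ kills the first), so $y_n$ lies in the subspace where Corollary 2.1 applies; moreover ${\bf T}^\alpha{\bf T}^q x_n={\bf T}^{\alpha+q}x_n$, whose norm converges to $|\lambda^\alpha|\,|\lambda^q|$ by the forward telescoping you already established, with no adjoints anywhere. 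Passing to the limit in
\begin{equation*}
\sum_{0\leq k\leq m}(-1)^k\binom{m}{k}\sum_{|\alpha|=k}\frac{k!}{\alpha!}\bigl\|{\bf T}^{\alpha+q}x_n\bigr\|^2=0
\end{equation*}
gives $|\lambda^q|^2\bigl(1-\|\lambda\|_2^2\bigr)^m=0$, hence $\lambda^q=0$ (so $\lambda\in[0]$) or $\|\lambda\|_2=1$, which is the assertion. This is what the paper does: it applies the partial-isometry identity to ${\bf T}^q x_n$, pairs with $x_n$, splits ${\bf T}^q=({\bf T}^q-\lambda^q)+\lambda^q$, and passes to the limit using only $({\bf T}^\alpha-\lambda^\alpha)x_n\to 0$.
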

\begin{proof}  Let $\lambda =(\lambda_1,\lambda_2,...,\lambda_d)\in \sigma_{ap}({\bf \large T})$, then there exists a
sequence $(x_n)_{n\geq 1} \subset \mathcal{H}$, with $||x_n||=1$
such that $({ T_j}-\lambda_j I)x_n \longrightarrow 0$ for all
$j=1,2,...,d$. Since for $\alpha_j>1$,
$$T_j^{\alpha_j}-\lambda_j^{\alpha_j}=(T_j-\lambda_j)\sum_{1\leq k\leq \alpha_j}\lambda_j^{k-1}T_j^{\alpha_j-k}$$ By induction, for  $\alpha
\in \mathbb{Z}_+^d$, we have $$ (T^\alpha-\lambda^\alpha I)=
\sum_{1\leq k\leq d}\bigg( \prod_{i\leq
k}\lambda_i^{\alpha_i}\bigg)\bigg(
T_j^{\alpha_j}-\lambda_j^{\alpha_i}\bigg)\prod_{i>k}T_i^{\alpha_i}.$$
Since, $\mathcal{R}({\bf \large T}^q) \subset \mathcal{N}({\bf
\large T}^q)^\perp$ we have  from Corollary 2.1 that , for all
$n\geq 1$
\begin{eqnarray*}
0 &=&\lambda^q \langle \sum_{0\leq k\leq
m}(-1)^{k}\binom{m}{k}\sum_{|\alpha|=k}\frac{k!}{\alpha!}{\bf\large
T}^{*\alpha}{\bf\large T}^\alpha {\bf \large T}^qx_n\;, x_n \rangle \\
&=&\lambda^q \langle \sum_{0\leq k\leq
m}(-1)^{k}\binom{m}{k}\sum_{|\alpha|=k}\frac{k!}{\alpha!}{\bf
T}^{*\alpha}{\bf\large T}^\alpha({\bf\large  T}^q-\lambda^q)x_n,\; x_n \rangle + \lambda^{2q}\sum_{0\leq k\leq m}(-1)^{k}\binom{m}{k}\sum_{|\alpha|=k}\frac{k!}{\alpha!}||{\bf T}^{\alpha}x_n||^2\\
&=& \lambda^q \langle \sum_{0\leq k \leq m}(-1)^{k}\binom{m}{k}{\bf T}^{*\alpha}{\bf T}^\alpha({\bf T}^q-\lambda^qx_n| x_n \rangle \\ &+& \lambda^{2q}\bigg\{\sum_{0\leq k \leq m}(-1)^{k}\binom{m}{k}\sum_{|\alpha|=k}\frac{k!}{\alpha!}\bigg(||({\bf T}^{\alpha}-\lambda^\alpha)x_n||^2+2Re \langle({\bf T}^{\alpha}-\lambda^\alpha)x_n|\lambda^\alpha x_n\rangle +|\lambda^\alpha|^{2}\bigg)\bigg\}\\
\end{eqnarray*}
as $ \big({\bf T}^\alpha-\lambda^\alpha I\big)x_n \to 0$ as $n
\longrightarrow \infty$ for all $\alpha \in \mathbb{Z}_+^d$  we
obtain that
$$0=\lambda^q\sum_{0\leq k \leq m}(-1)^k\binom{m}{k}\sum_{|\alpha|=k}\frac{k!}{\alpha!}|\lambda^\alpha|^2=\lambda^q(1-\|\lambda\|_2^2)^m,$$  where
$| \lambda|=\bigg(\displaystyle\sum_{1\leq k\leq
d}|\lambda_k|^2\bigg)^{\frac{1}{2}}$ Then $\lambda^q= 0 $ or
$\|\lambda\|_2=1$. This implies that
$$\lambda \in \{ (\lambda_1,\lambda_2,...,\lambda_d) \in
\mathbb{C}^d:\;\prod_{1\leq k\leq d}\lambda_k=0\}\;\;\hbox{or}\;\;
\lambda \in \partial \mathbb{B}(\mathbb{C}^d).
$$
\end{proof}
\begin{corollary} If ${\bf\large T}=(T_1,T_2,...,T_d) \in \mathcal{B}(\mathcal{H})^d$ is an $(m;(q_1,q_2,...,q_d))$-partial isometry  $d$-tuple of operators such that  $\mathcal{N}({\bf\large T}^q)$ is a reducing subspace for each $T_j$
(  $1\leq j\leq d$). Then $r({\bf \large T})=1$.  In particular
$\sigma({\bf \large T}) \subset \mathbb{\partial B}(\mathbb{C}^d)$
or $\sigma_a({\bf \large T})= \overline{\mathbb{B}}(\mathbb{C}^d).$
\end{corollary}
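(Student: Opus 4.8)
The plan is to deduce $r({\bf \large T})=1$ from Theorem 3.1 together with two standard features of the Taylor joint spectrum: that $\sigma({\bf \large T})$ is a nonempty compact subset of $\mathbb{C}^d$, and that its topological boundary satisfies $\partial\sigma({\bf \large T})\subseteq\sigma_{ap}({\bf \large T})$. By the definition of the spectral radius, $r({\bf \large T})=\max\{\|\lambda\|_2:\lambda\in\sigma({\bf \large T})\}$, and since $\sigma({\bf \large T})$ is compact this maximum is attained at some $\lambda_0$. A point of maximal Euclidean norm in a compact set cannot be interior --- a radial push outward would remain in the set and strictly increase the norm --- so $\lambda_0\in\partial\sigma({\bf \large T})\subseteq\sigma_{ap}({\bf \large T})$. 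Theorem 3.1 then forces $\lambda_0\in\partial\mathbb{B}(\mathbb{C}^d)\cup\big[0\big]$, i.e. either $\|\lambda_0\|_2=1$ or $\prod_{1\leq k\leq d}\lambda_{0,k}=0$.

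For the complementary lower bound $r({\bf \large T})\geq 1$ I would use the $m$-isometric structure hidden in the hypothesis. Since $\mathcal{N}({\bf \large T}^q)$ reduces every $T_j$, the space splits as $\mathcal{H}=\mathcal{N}({\bf \large T}^q)\oplus\mathcal{N}({\bf \large T}^q)^\perp$ with both summands reducing ${\bf \large T}$, and by Corollary 2.1 the compression ${\bf \large T}|_{\mathcal{N}({\bf \large T}^q)^\perp}$ is an $m$-isometric tuple. By $[17]$ its joint approximate point spectrum lies in $\partial\mathbb{B}(\mathbb{C}^d)$, so (by the same boundary argument as above) its spectral radius equals $1$; since the Taylor spectrum of a reducing orthogonal sum is the union of the spectra of the summands, $\sigma({\bf \large T})\supseteq\sigma\big({\bf \large T}|_{\mathcal{N}({\bf \large T}^q)^\perp}\big)$ already contains a point of modulus $1$. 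Thus $r({\bf \large T})\geq 1$ whenever $\mathcal{N}({\bf \large T}^q)\neq\mathcal{H}$.

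The step I expect to be the main obstacle is closing the gap between these two bounds, namely ruling out a norm-maximizer $\lambda_0\in\big[0\big]$ with $\|\lambda_0\|_2>1$: Theorem 3.1 confines $\sigma_{ap}({\bf \large T})$ only to $\partial\mathbb{B}(\mathbb{C}^d)\cup\big[0\big]$, and $\big[0\big]$ carries no intrinsic norm bound, so the contribution of $\mathcal{N}({\bf \large T}^q)$ --- the subspace on which ${\bf \large T}^q=0$ --- to the joint spectrum must be controlled directly. On that summand $\prod_{1\leq k\leq d}T_k^{q_k}=0$, so by the spectral mapping theorem every $\lambda$ in its joint spectrum obeys $\prod_{1\leq k\leq d}\lambda_k^{q_k}=0$ and hence lies in $\big[0\big]$; the remaining task is to verify that this nilpotent piece cannot carry $\|\lambda\|_2$ beyond $1$. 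Finally, the ``in particular'' dichotomy mirrors the single-variable $m$-isometry picture: according to whether the $m$-isometric compression ${\bf \large T}|_{\mathcal{N}({\bf \large T}^q)^\perp}$ is unitary-like or properly isometric, the spectrum either sits entirely on the distinguished boundary $\partial\mathbb{B}(\mathbb{C}^d)$, or the joint approximate point spectrum swells to fill $\overline{\mathbb{B}}(\mathbb{C}^d)$.
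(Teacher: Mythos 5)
Both places where you defer the work are genuine gaps, and they are exactly where the paper's own (terse) proof lives. The first gap --- ruling out a norm-maximizer $\lambda_0\in\big[0\big]$ with $\|\lambda_0\|_2>1$ --- is not closable by the direct verification you sketch: on $\mathcal{N}({\bf T}^q)$ the defining identity of an $(m;(q_1,\dots,q_d))$-partial isometry holds vacuously, so nothing in the hypotheses bounds $\sigma\big({\bf T}|_{\mathcal{N}({\bf T}^q)}\big)$ beyond its lying in $\big[0\big]$. Concretely, take $d=2$, $m=1$, $q=(1,1)$, $T_1=\mathrm{diag}(0,1)$, $T_2=\mathrm{diag}(2,0)$ on $\mathbb{C}^2$: the operators commute, ${\bf T}^q=T_1T_2=0$, so the partial-isometry identity holds trivially and $\mathcal{N}({\bf T}^q)=\mathbb{C}^2$ reduces everything, yet the joint spectrum is $\{(0,2),(1,0)\}$, so $r({\bf T})=2$ (and one can adjoin an isometric direct summand, e.g. $T_1=T_2=1/\sqrt{2}$ on $\mathbb{C}$, to make $\mathcal{N}({\bf T}^q)$ a proper nonzero reducing subspace). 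So the obstacle you flagged is fatal to any argument that uses only Theorem 3.1. For comparison, the paper "closes" this step by invoking the theorem quoted from $[25]$ (the convex hulls of all joint spectra of a commuting tuple coincide) and then reading Theorem 3.1 as if it asserted $\sigma_{ap}({\bf T})\subset\partial\mathbb{B}(\mathbb{C}^d)$, i.e. it silently discards the $\big[0\big]$ component; your proposal is more honest about this point, but, like the paper, it does not actually bridge it.

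The second gap is the dichotomy. Your closing sentences ("unitary-like or properly isometric") are a heuristic, not an argument, and your direct-sum route cannot produce the dichotomy, because $\sigma({\bf T})$ also contains $\sigma\big({\bf T}|_{\mathcal{N}({\bf T}^q)}\big)$, which you never locate. The paper's argument here is a connectedness argument that you could have run with the boundary inclusion $\partial\sigma({\bf T})\subseteq\sigma_{ap}({\bf T})$ you already invoked: granting $\sigma_{ap}({\bf T})\cap\mathbb{B}(\mathbb{C}^d)=\emptyset$, the set $\rho({\bf T})\cap\mathbb{B}(\mathbb{C}^d)$ is open (the resolvent set is open) and also closed in $\mathbb{B}(\mathbb{C}^d)$, since a spectral point in the open ball that is a limit of resolvent points would lie on $\partial\sigma({\bf T})\subseteq\sigma_{ap}({\bf T})$; as $\mathbb{B}(\mathbb{C}^d)$ is connected, either $\sigma({\bf T})\cap\mathbb{B}(\mathbb{C}^d)=\emptyset$, whence $\sigma({\bf T})\subset\partial\mathbb{B}(\mathbb{C}^d)$ (using $r({\bf T})\leq 1$), or $\mathbb{B}(\mathbb{C}^d)\subset\sigma({\bf T})$, whence $\sigma({\bf T})=\overline{\mathbb{B}}(\mathbb{C}^d)$ by closedness of the spectrum. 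Your lower-bound paragraph (spectral radius at least $1$ via the $m$-isometric compression, valid when $\mathcal{N}({\bf T}^q)\neq\mathcal{H}$) is correct, but it is not where the difficulty of the corollary lies.
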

\noindent{\bf Proof.}\; It is known (see for example $[25]$ that the
convex envelopes of all spectra coincide. Thus from Theorem 4.1 we
have that the approximate point spectrum of the tuple ${\bf \large
T}=(T_1,T_2,...,T_d)$ is contained in the boundary of the unit
ball,it follows that $r(T)=1.$ \\
In the other hand, $\rho({\bf\large T) \cap
\mathbb{B}}(\mathbb{C}^d)$ is both open and closed subset of the
domain $\mathbb{B}(\mathbb{C}^d).$ Consequently we find
$\sigma({\bf\large T}) \subset \mathbb{\partial B}(\mathbb{C}^d)$ or
$\sigma({\bf\large T})= \overline{\mathbb{B}}(\mathbb{C}^d).$

\par \vskip 0.2 cm \noindent
We have also, the following properties.
\begin{proposition} Let ${\bf \large T}=(T_1,T_2,...,T_d)\in \mathcal{B}(\mathcal{H})^d$ be an joint  $(m;(q_1,q_2,...,q_d))$-partial isometry $d$-tuple such that $\mathcal{N}({\bf T}^q)$ is a reducing subspace for   $T_j$ $(1\leq j\leq
d$). The following properties hold.
\begin{enumerate}
\item If $ \lambda=(\lambda_1,\lambda_2,...,\lambda_d) \in \sigma_{ap}({\bf T})\backslash \big[0 \big]$ then $
\overline{\lambda}=(\overline{\lambda_1},\overline{\lambda_2},...,\overline{\lambda_d})\in
\sigma_{ap}({\bf T}^*).$ \item If $
\lambda=(\lambda_1,\lambda_2,...,\lambda_d) \in \sigma_p({\bf
T})\backslash \big[0 \big]$ then $\overline{\lambda}\in
\sigma_p({\bf T}^*).$
\item Eigenvectors of ${\bf T}$ corresponding to distinct eigenvalues are
orthogonal.
\end{enumerate}
\end{proposition}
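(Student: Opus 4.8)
The plan is to reduce all three parts to the $m$-isometric tuple that ${\bf T}$ induces on $\mathcal{N}({\bf T}^q)^\perp$ via Corollary 2.1, together with the identity $\lambda^q(1-\|\lambda\|_2^2)^m=0$ already extracted in the proof of Theorem 4.1. I would prove (2) first, since it produces an honest eigenvector of ${\bf T}^*$ that then drives (3).

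For (2), let $x\neq 0$ satisfy $T_jx=\lambda_jx$ for all $j$ with $\lambda\notin[0]$, so that $\prod_k\lambda_k\neq 0$ and hence $\lambda^q\neq 0$. First I would show $x\in\mathcal{N}({\bf T}^q)^\perp$: writing $x=x_0+x_1$ along the reducing decomposition $\mathcal{H}=\mathcal{N}({\bf T}^q)\oplus\mathcal{N}({\bf T}^q)^\perp$, each summand is again a joint eigenvector for $\lambda$, and ${\bf T}^qx_0=\lambda^qx_0=0$ with $\lambda^q\neq 0$ forces $x_0=0$. On $\mathcal{N}({\bf T}^q)^\perp$ the tuple is $m$-isometric, so evaluating the defining $m$-isometric identity at $x$ gives $(\|\lambda\|_2^2-1)^m\|x\|^2=0$, i.e. $\|\lambda\|_2=1$. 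The core computation is then, for each $j$, $\|(T_j^*-\overline{\lambda_j})x\|^2=\|T_j^*x\|^2-|\lambda_j|^2\|x\|^2$, using $\langle T_j^*x,x\rangle=\overline{\lambda_j}\|x\|^2$. Summing over $j$ and invoking $\|\lambda\|_2=1$, the desired conclusion $T_j^*x=\overline{\lambda_j}x$ for every $j$ is equivalent to the norm balance $\sum_j\|T_j^*x\|^2=\|x\|^2$; deriving this from the $m$-isometric relation is the crux. Once it holds, $x$ itself witnesses $\overline{\lambda}\in\sigma_p({\bf T}^*)$.

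For (1) the scheme is identical with an approximate eigensequence $(x_n)$, $\|x_n\|=1$, $(T_j-\lambda_j)x_n\to 0$; Theorem 4.1 again yields $\|\lambda\|_2=1$, and the same expansion gives $\|(T_j^*-\overline{\lambda_j})x_n\|^2=\|T_j^*x_n\|^2-|\lambda_j|^2+o(1)$, using $\langle T_j^*x_n,x_n\rangle\to\overline{\lambda_j}$. Thus $(x_n)$ witnesses $\overline{\lambda}\in\sigma_{ap}({\bf T}^*)$ as soon as $\sum_j\|T_j^*x_n\|^2\to 1$. Here Cauchy--Schwarz gives $\|T_j^*x_n\|\geq|\langle T_j^*x_n,x_n\rangle|\to|\lambda_j|$, hence $\liminf_n\sum_j\|T_j^*x_n\|^2\geq\|\lambda\|_2^2=1$ for free; the matching upper bound, which is exactly where the $m$-isometric structure on $\mathcal{N}({\bf T}^q)^\perp$ must be exploited, is the main obstacle of the whole proposition.

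Finally (3) follows from (2). If $T_jx=\lambda_jx$ and $T_jy=\mu_jy$ with $\lambda\neq\mu$, choose $j_0$ with $\lambda_{j_0}\neq\mu_{j_0}$; assuming $\mu\notin[0]$, part (2) gives $T_{j_0}^*y=\overline{\mu_{j_0}}y$, whence $\lambda_{j_0}\langle x,y\rangle=\langle T_{j_0}x,y\rangle=\langle x,T_{j_0}^*y\rangle=\mu_{j_0}\langle x,y\rangle$, so $(\lambda_{j_0}-\mu_{j_0})\langle x,y\rangle=0$ and $\langle x,y\rangle=0$. The degenerate situation where both eigenvalues lie in $[0]$ would be handled separately by splitting the eigenvectors along $\mathcal{N}({\bf T}^q)\oplus\mathcal{N}({\bf T}^q)^\perp$ and arguing on each reducing summand.
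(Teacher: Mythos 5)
The step you yourself flag as ``the crux'' is not merely missing from your write-up; it is false, so the strategy cannot be repaired. You reduce (2) to the norm balance $\sum_{j}\|T_j^*x\|^2=\|x\|^2$ for the \emph{given} joint eigenvector $x$, and (1) to $\sum_j\|T_j^*x_n\|^2\to 1$ for the \emph{given} approximate eigensequence; in other words, to the assertion that the same vector (or sequence) that works for ${\bf T}$ also works for ${\bf T}^*$. Already for $d=1$, which the proposition allows, this fails: the operator
\begin{equation*}
T=\begin{pmatrix} 1 & 1\\ 0 & 1 \end{pmatrix}
\end{equation*}
satisfies $T^{*3}T^3-3\,T^{*2}T^2+3\,T^*T-I=0$, so it is a $3$-isometry, hence a joint $(3;q)$-partial isometry with $\mathcal{N}(T^q)=\{0\}$ trivially reducing. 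The vector $x=e_1$ is an eigenvector with eigenvalue $\lambda=1\notin[0]$ and $\|\lambda\|_2=1$, yet $T^*x=(1,1)^{\mathrm t}$, so $\|T^*x\|^2=2\neq\|x\|^2$ and $x$ is not an eigenvector of $T^*$; likewise the constant sequence $x_n=e_1$ is an approximate eigensequence for $T$ while $\|(T^*-1)x_n\|=1$ for every $n$. The conclusions of (1) and (2) are nevertheless true here, because $T^*$ has the \emph{different} eigenvector $e_2$. This shows any correct proof must produce a new vector rather than re-use $x$: your Cauchy--Schwarz lower bound is fine, but the ``matching upper bound'' you hope to extract from the $m$-isometric relation simply does not exist once $m\geq 2$.

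This is exactly why the paper's argument keeps the $m$-th power instead of trying to strip it off. Evaluating the defining identity along $(x_n)$ and using $\|\lambda\|_2=1$ (Theorem 3.1), the paper arrives at $\bigl(\sum_{1\leq j\leq d}\lambda_j(\overline{\lambda_j}-T_j^*)\bigr)^m x_n\to 0$, concludes that this operator is not bounded below, and then appeals to Proposition 3.1 to manufacture a \emph{fresh} sequence of unit vectors $(y_n)$ with $\|(T_j^*-\overline{\lambda_j})y_n\|\to 0$ for all $j$; the witness for $\sigma_{ap}({\bf T}^*)$ comes out of Proposition 3.1, not out of the original sequence (in the example above, this is how $e_2$ arises: it spans the kernel of $I-T^*$). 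Note also that the paper proves (3) directly, without using (2), by pairing the defining identity against both eigenvectors via $\langle{\bf T}^\alpha x,{\bf T}^\alpha y\rangle=\lambda^\alpha\overline{\mu}^\alpha\langle x,y\rangle$ and using that $1-\sum_j\lambda_j\overline{\mu_j}\neq 0$ for distinct unit vectors $\lambda,\mu$; your (3), which routes through (2), inherits the gap above, and your closing remark that the case of eigenvalues in $[0]$ ``would be handled by splitting along the reducing decomposition'' is a hand-wave, not an argument.
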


\begin{proof} \begin{enumerate}  \item Let $\lambda= (\lambda_1,\lambda_2,...,\lambda_d)\in \sigma_{ap}({\bf T})\backslash
\{ (\lambda_1,\lambda_2,...,\lambda_d) \in
\mathbb{C}^d:\displaystyle\prod_{1\leq k\leq d}\lambda_k=0\}$,
choose a sequence $(x_n)_{n}\subset\mathcal{H},$ such that
$\|x_n\|=1$ and  $(T_j-\lambda_j ) x_n\longrightarrow 0$ for all
$j=1,2,...,d$. Following similar arguments it is easy to see that
for all $\alpha_j\geq 0.$
$(T_j^{\alpha_j}-\lambda_j^{\alpha_j})x_n\longrightarrow 0$ as
$n\longrightarrow \infty$  and
$$(T^\alpha-\lambda^\alpha)x_n\longrightarrow 0.$$
On the other hand
\begin{eqnarray*}
{\bf \large T}^{*\alpha}{\bf T}^\alpha({\bf \large
T}^q-\lambda^q)x_n&=&{\bf \large T}^{*\alpha}{\bf \large T}^\alpha
{\bf \large T}^qx_n-\lambda^q{\bf \large T}^{*\alpha}{\bf \large
T}^\alpha x_n\\&=& {\bf \large T}^{*\alpha}{\bf \large T}^\alpha
{\bf \large T}^qx_n-\lambda^q{\bf \large T}^{*\alpha}\big({\bf
\large T}^\alpha-\lambda^\alpha\big)x_n+\lambda^q{\bf \large
T}^{*\alpha}\lambda^\alpha x_n \longrightarrow 0.
\end{eqnarray*}
Since ${\bf \large T}$ is an joint $(m;(q_1,q_2,...,q_d))$-partial
isometry ,we observe that
$$\lambda^q\sum_{1\leq k\leq m}(-1)^k\binom{m}{k}\sum_{|\alpha|=k}\frac{k!}{\alpha!}\big(\lambda {\bf \large T}^*)^\alpha x_n\longrightarrow 0.$$
 and hence,
$$\lambda^q\bigg( {\bf \large I}-\sum_{1\leq j \leq d}\lambda_j T_j^*\bigg)^m x_n\longrightarrow
0,$$ Using the fact that
$\lambda=(\lambda_1,\lambda_2,...,\lambda_d)\in \sigma_{ap}({\bf
\large T})\backslash [0]$, we get
$$\bigg( \underbrace{\sum_{1\leq j\leq d}|\lambda_j|^2}_{=1}I-\sum_{1\leq j \leq d}\lambda_j T_j^*\bigg)^mx_n\longrightarrow 0,$$
or equivalently
$$\bigg(\sum_{1\leq j\leq d}\lambda_j\big(\overline{\lambda_j}-T_j^*\big) \bigg)^mx_n\longrightarrow 0
.$$

We deduce that
$$\sum_{1\leq j \leq d}\lambda_jI_{\mathcal{H}}.(\overline{\lambda_j}-T_j^*) $$ is not bounded below and  in view of Proposition 3.1,
it follows that there exists a sequence $(x_n)_n\subset \mathcal{H}$
such that $\|x_n\|=1$ and
$$\lim_{n\longrightarrow \infty}\|(T_j^*-\overline{\lambda}_j)x_n\|=0\;\;\hbox{for}\;\;j=1,2,...,d.$$
So we get
 $(\overline{\lambda}_1,
\overline{\lambda}_2...,\overline{\lambda}_d)\in \sigma_{ap}({\bf
\large T^*})$ and the proof of this implication is over.\item

 Let $\lambda =(\lambda_1,\lambda_2,...,\lambda_d)\in \sigma_p({\bf\large T})\backslash [0]$,
there exists a non zero vector $x\in \mathcal{H}$ such that
$$T_jx=\lambda_j x \;\;\hbox{for}\;\; j=1,2,..,d.$$ By using a similar argument as in
$1$ we show
 $(T_j^*-\overline{\lambda}_jI) x=0$ for  j  $(1\leq j \leq d)$ from which it follows
that
$\overline{\lambda}=(\overline{\lambda}_1...,\overline{\lambda}_d)\in
\sigma_p({\bf\large T}^*).$ \item  Let $\lambda
=(\lambda_1,\lambda_2,...,\lambda_d)$ and
$\mu=(\mu_1,\mu_2,...,\mu_d)$ be distinct eigenvalues of ${\bf
\large T}$. Assume that $$T_jx=\lambda_j  x\;\;\hbox{ and}\;\;
T_jy=\mu_j y\;\;\hbox{for}\;\;j=1,2,...,d.$$ Then
\begin{eqnarray*}
0&=&\lambda^q\big \langle  \displaystyle\sum_{0\leq k\leq
m}(-1)^k\binom{m}{k}\sum_{|\alpha|=k}\frac{k!}{\alpha!} {\bf \large
T}^{*\alpha}{\bf \large T}^\alpha x ,\; y \big\rangle
\\
& =& \lambda^q \displaystyle\sum_{0\leq k\leq
m}(-1)^k\binom{m}{k}\sum_{|\alpha|=k}\frac{k!}{\alpha!}\big(
\lambda.\overline{\mu}\big)^\alpha\langle
x,\;y\rangle\\&=&\bigg(1-\sum_{1\leq j\leq
d}\lambda_j\overline{\mu}_j\bigg)^m\langle x,\;y\rangle.
\end{eqnarray*}
where
$\lambda.\overline{\mu}=(\lambda_1\overline{\mu}_1,\lambda_2\overline{\mu}_2,,...,\lambda_d\overline{\mu}_d).$

 Since  $1-\displaystyle\sum_{1\leq j\leq
d}\lambda_j\overline{\mu}_j \not=0$, we obtain that $\langle
x\;|\;y\rangle =0.$
\end{enumerate}
\end{proof}

\begin{lemma}
 Let ${\bf\large T}=(T_1,T_2,...,T_d)\in \mathcal{B}(\mathcal{H})^d$ be an joint $(m;(q_1,q_2,...,q_d))$-partial isometry such that $\mathcal{N}({\bf\large T^q})$ is a reducing subspace for $T_j,$   $j=1,..,d$.
 Let  $ \lambda =(\lambda_1,...,\lambda_d)$ and $
\mu=(\mu_1,...\mu_d) \in \sigma_{ap}({\bf\large T})$  such that
$\lambda-\mu \notin [0]$. If
 $(x_n)_n$ and  $(y_n)_n$ are two sequences of unit vectors in $\mathcal{H}$ such that
$$\|(T_j-\lambda_j) x_n\|\longrightarrow 0 \;\hbox{and} \; \|(T_j-\mu_j)y_n\| \longrightarrow 0\;\;(\hbox{as}\;\;
n \longrightarrow \infty) \;\;\hbox{for all}\;\;j=1,2,...,d ,$$ then
we have
\begin{equation}\label{eqnconllem}
\langle x_n |\; y_n\rangle \longrightarrow 0  \;\; (\hbox{as}\;\;
n\longrightarrow \infty).
\end{equation}
\end{lemma}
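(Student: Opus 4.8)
The plan is to reproduce, at the level of approximating sequences, the computation that produced the orthogonality of eigenvectors in part (3) of the preceding Proposition (Proposition 3.2), replacing exact eigenvectors by the given sequences and controlling the resulting errors by boundedness.

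First I would propagate the two approximate eigenvalue relations from the generators to all monomials. Starting from $\|(T_j-\lambda_j)x_n\|\to 0$ and the telescoping identity $T_j^{\alpha_j}-\lambda_j^{\alpha_j}=(T_j-\lambda_j)\sum_{1\le k\le\alpha_j}\lambda_j^{k-1}T_j^{\alpha_j-k}$ already used in the proof of the spectral inclusion theorem, an induction on $|\alpha|$ (using commutativity of the $T_j$) yields $\|({\bf T}^\alpha-\lambda^\alpha)x_n\|\to 0$ and, symmetrically, $\|({\bf T}^\alpha-\mu^\alpha)y_n\|\to 0$ for every $\alpha\in\mathbb{Z}_+^d$. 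Since $\|x_n\|=\|y_n\|=1$, this permits replacing ${\bf T}^\alpha x_n$ by $\lambda^\alpha x_n$ and ${\bf T}^\alpha y_n$ by $\mu^\alpha y_n$ inside any inner product, up to a term tending to $0$.

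For the main computation, observe that because $\mathcal{N}({\bf T}^q)$ reduces each $T_j$, Corollary 2.1 shows the operator $S:=\sum_{0\le k\le m}(-1)^k\binom{m}{k}\sum_{|\alpha|=k}\frac{k!}{\alpha!}{\bf T}^{*\alpha}{\bf T}^\alpha$ vanishes on $\mathcal{N}({\bf T}^q)^\perp$; as $\mathcal{R}({\bf T}^q)\subset\mathcal{N}({\bf T}^q)^\perp$ (the inclusion invoked in the spectral inclusion theorem), it follows that $S\,{\bf T}^q=0$, whence $\langle S\,{\bf T}^q x_n,\,y_n\rangle=0$ for all $n$. Writing this out via $\langle{\bf T}^{*\alpha}{\bf T}^\alpha{\bf T}^q x_n,\,y_n\rangle=\langle{\bf T}^{\alpha+q}x_n,\,{\bf T}^\alpha y_n\rangle$ and applying the propagation step gives
\[
0=\lambda^q\Bigg(\sum_{0\le k\le m}(-1)^k\binom{m}{k}\sum_{|\alpha|=k}\frac{k!}{\alpha!}\lambda^\alpha\overline{\mu}^{\,\alpha}\Bigg)\langle x_n,y_n\rangle+o(1).
\]
By the multinomial identity already used in Example 2.2 and Proposition 3.2, the inner sum collapses to $\big(1-\sum_{1\le j\le d}\lambda_j\overline{\mu}_j\big)^m$, so that $\lambda^q\big(1-\sum_j\lambda_j\overline{\mu}_j\big)^m\langle x_n,y_n\rangle\to 0$. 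Interchanging $(\lambda,x_n)$ with $(\mu,y_n)$ and conjugating produces the companion relation $\overline{\mu}^{\,q}\big(1-\sum_j\lambda_j\overline{\mu}_j\big)^m\langle x_n,y_n\rangle\to 0$.

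It then remains to show the scalar factors do not all vanish. The decisive one is $c:=1-\sum_j\lambda_j\overline{\mu}_j$: by the spectral inclusion theorem both $\lambda$ and $\mu$ lie in $\partial\mathbb{B}(\mathbb{C}^d)\cup[0]$, and in the principal case $\|\lambda\|_2=\|\mu\|_2=1$ the Cauchy--Schwarz inequality gives $\big|\sum_j\lambda_j\overline{\mu}_j\big|\le 1$ with equality only for proportional $\lambda,\mu$, which under $\|\lambda\|_2=\|\mu\|_2=1$ forces $\lambda=\mu$; since $\lambda-\mu\notin[0]$ entails $\lambda_k\neq\mu_k$ for every $k$ and hence $\lambda\neq\mu$, we get $c\neq 0$, and dividing either relation by $c^m$ yields $\langle x_n,y_n\rangle\to 0$ as soon as one of $\lambda^q,\mu^q$ is nonzero. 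I expect the genuine obstacle to be exactly the degenerate configuration $\lambda^q=\mu^q=0$, where both weighted identities collapse: there one should split $\mathcal{H}=\mathcal{N}({\bf T}^q)\oplus\mathcal{N}({\bf T}^q)^\perp$, run the clean $m$-isometric orthogonality (on which $S=0$) to force the $\mathcal{N}({\bf T}^q)^\perp$-component of $\langle x_n,y_n\rangle$ to $0$, and treat the $\mathcal{N}({\bf T}^q)$-component separately using ${\bf T}^q=0$ there; this last piece is the delicate part of the argument.
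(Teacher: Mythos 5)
Your core computation is correct and is a genuinely different route from the paper's, but as it stands it does not close the proof, and there is a gap beyond the one you acknowledge. Your nonvanishing argument for $c=1-\sum_{1\leq j\leq d}\lambda_j\overline{\mu}_j$ covers only the principal case $\|\lambda\|_2=\|\mu\|_2=1$. Theorem 3.1, however, only places $\sigma_{ap}({\bf T})$ inside $\partial\mathbb{B}(\mathbb{C}^d)\cup[0]$, and points of $[0]$ can have arbitrary norm (see the paper's example before Theorem 3.1, whose spectrum contains the point $(a,0,...,0)$ with $|a|>1$). In a mixed configuration $c$ can vanish: for $d=2$ take $\lambda=(0,2)\in[0]$ and $\mu=(\sqrt{3}/2,1/2)\in\partial\mathbb{B}(\mathbb{C}^2)$; then $\lambda-\mu=(-\sqrt{3}/2,3/2)\notin[0]$, yet $c=1-(0\cdot\sqrt{3}/2+2\cdot 1/2)=0$. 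When $c=0$, both of your limit relations $\lambda^q c^m\langle x_n,y_n\rangle\to 0$ and $\overline{\mu}^{q}c^m\langle x_n,y_n\rangle\to 0$ are vacuous no matter whether $\lambda^q$ or $\mu^q$ vanishes, so "dividing by $c^m$" is unavailable and the method stalls; your proof would have to rule out this configuration, and it does not. Separately, the degenerate case $\lambda^q=\mu^q=0$ that you flag is a genuine unresolved gap, and the sketched repair is not a proof: on the reducing subspace $\mathcal{N}({\bf T}^q)$ the restricted tuple is merely a commuting tuple annihilated by ${\bf T}^q$, with no isometric structure whatsoever, so nothing forces the $\mathcal{N}({\bf T}^q)$-components of $x_n$ and $y_n$ to become asymptotically orthogonal.

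For comparison, the paper's proof bypasses all of this machinery. Assuming $\mu\notin[0]$, it invokes part 1 of Proposition 3.2 (applied to the sequence $(y_n)$) to get $\|(T_j^*-\overline{\mu_j})y_n\|\to 0$ for each $j$, and then uses the elementary identity
$$(\lambda_j-\mu_j)\langle x_n,\,y_n\rangle=-\langle (T_j-\lambda_j)x_n,\,y_n\rangle+\langle x_n,\,(T_j-\mu_j)^*y_n\rangle\longrightarrow 0,$$
where both terms tend to zero by boundedness of the unit sequences; since $\lambda-\mu\notin[0]$ forces $\lambda_j\neq\mu_j$ for every $j$, the conclusion follows at once. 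This needs no multinomial collapse, no spectral location of $\lambda$, and no constant $c$; its only restriction is that one of the two points lie outside $[0]$ (a case your approach does not resolve either). To salvage your route you would need, at minimum, to prove $c\neq 0$ in the mixed configurations (or handle $c=0$ directly) and to settle the case $\lambda^q=\mu^q=0$; alternatively, adopt the paper's argument via Proposition 3.2(1).
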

\begin{proof}  Assume that $\mu \notin [0].$ Then from part 1. of  Proposition 3.2
we have  that $\|(T_j^*-\overline{\mu_j})y_n\| \longrightarrow 0$ as
$n\longrightarrow \infty,\;j=1,2,...,d.$ Hence,for all $j=1,2,...,d$
$$(\lambda_j-\mu_j)\langle x_n|\; y_n\rangle =-\langle (T_j-\lambda_j)x_n|\; y_n\rangle
+ \langle x_n |\;(T_j-\mu_j)^*y_n\rangle \longrightarrow 0,\;\; n
\longrightarrow \infty,$$ which implies \eqref{eqnconllem} in view
of $\lambda - \mu \notin [0]$ and the proof is complete.\end{proof}
\section{JOINT LEFT $m$-INVERSE AND JOINT RIGHT $m$-INVERSE OF TUPLE OF OPERATORS}
An operator  $T\in \mathcal{B}(\mathcal{H})$  is said to be left
invertible if there is an operator $S\in \mathcal{B}(\mathcal{H})$
such that $ST = I_{\mathcal{H}}$, where $I_{\mathcal{H}}$,denotes
the identity operator. The operator $S$ is called a left inverse of
$T$.
 An operator  $T\in \mathcal{B}(\mathcal{H})$   is said to be right invertible if there is an operator  $R\in \mathcal{B}(\mathcal{H})$
such that $TR = I_{\mathcal{H}}$. The operator $R$ is called a right
inverse of $T$.\par\vskip 0.2 cm \noindent The left and right
$m$-invertibility of operator have been introduced by the present
author in  $[22]$ and by B.P.Duggal and V. M\"{u}ller in
$[15]$.\par\vskip 0.2 cm \noindent  Given a positive integer $m$. A
bounded linear operator $T$ is called left $m$-invertible (resp.
right $m$-invertible) if there exists a bounded linear operator $S$
such that
$$\sum_{0\leq k \leq m}(-1)^{m-k}\binom{m}{k}S^kT^k=0
\bigg(resp. \sum_{0\leq k \leq
m}(-1)^{m-k}\binom{m}{k}T^kS^k=0\bigg).$$ The $m$-invertibility have
been extensively studied in the recent paper $[19]$ by C.Gu.
\par \vskip 0.2 cm \noindent The following definition generalize the
definition of left $m$-invertibility and right $m$-invertibility of
a single operator to tuple of operators.
\begin{definition}
Let  ${\bf \large T}=(T_1,T_2,...,T_d) \in
\mathcal{B}(\mathcal{H})^d $ be an  commuting $d$-tuple of operators
on $\mathcal{H}$,we say that ${\bf\large T}$ is a joint left
$m$-invertible (resp. joint right $m$-invertible) for some integer
$m\geq 1$, if there exists a commuting $d$-tuple operators ${\bf
\large S}=(S_1,S_2,...,S_d) \in \mathcal{B}(\mathcal{H})^d $ such
that
$$\sum_{0\leq k\leq m}(-1)^{m-k}\binom{m}{k}\sum_{|\alpha|=k}\frac{k!}{\alpha!}{\bf\large S}^\alpha {\bf\large
T}^\alpha=0\;\;$$ $$ \bigg(\;\hbox{resp}.\;\;\sum_{0\leq k\leq
m}(-1)^{m-k}\binom{m}{k}\sum_{|\alpha|=k}\frac{k!}{\alpha!}{\bf\large
T}^\alpha {\bf\large S}^\alpha=0\bigg).$$ ${\bf\large S}$ is called
a left (resp.right) $m$-inverse of ${\bf\large T}.$\par\vskip 0.2 cm
\noindent We say that ${\bf \large T}=(T_1,T_2,...,T_d) \in
\mathcal{B}(\mathcal{H})^d $ is $m$-invertible $d$-tuple of
commuting operators if it has both a left $m$-inverse and a right
$m$-inverse.

\end{definition}
An interesting example of a left $m$-invertible commuting tuple
operator is that of an $m$-isometric tuple operator.
\begin{remark} It is  clear that ${\bf\large S}$ is a
left $m$-inverse of ${\bf\large T}$ if and only if ${\bf\large T}^*$
is a left $m$-inverse of ${\bf\large S}^*.$\end{remark}
\begin{example}
Let $T_1=\left(
           \begin{array}{cc}
             1 & 1 \\
             0 & 1 \\
           \end{array}
         \right)$ and $S_1=\left(
           \begin{array}{cc}
             1 & -1 \\
             0 & 1 \\
           \end{array}
         \right)
$. Then the pair ${\bf\large T}=(T_1,T_1)$ is $m$-invertible tuple
with $m$-inverse ${\bf\large S}=(S_1,S_2)$ in
$\mathcal{B}(\mathbb{C}^2)^2$.
\end{example}
\begin{remark}

\begin{enumerate}
\item ${\bf \large S}=(S_1,S_2,...,S_d) $ is a joint left inverse (or 1-inverse) of ${\bf \large T}=(T_1,T_2,...,T_d)$ if and only if
$$S_1T_1+S_2T_2+...+S_dT_d=I_{\mathcal{H}}.$$
\item  ${\bf \large S}=(S_1,S_2,...,S_d) $ is a joint right inverse (or 1-inverse) of ${\bf \large T}=(T_1,T_2,...,T_d)$ if and only if
$$T_1S_1+T_2S_2+...+S_dT_d=I_{\mathcal{H}}.$$
\end{enumerate}
\end{remark}
$${\beta}_{m}({\bf \large S},{\bf \large T})=\displaystyle\sum_{0\leq k\leq m}(-1)^{m-k}\binom{m}{k}\sum_{|\alpha|=k}\frac{k!}{\alpha!}{\bf\large S}^\alpha {\bf\large T}^\alpha.$$
\begin{lemma}
Il ${\bf \large T}=(T_1,T_2,...,T_d) $ and ${\bf \large
S}=(S_1,S_2,...,S_d) \in \mathcal{B}(\mathcal{H})^d $ are commuting
$d$-tuples of operators,then we have the following equality
$$\beta_{m+1}({\bf \large S},{\bf \large T})=-\beta_{m}({\bf \large S},{\bf \large T})+\sum_{1\leq j \leq d}S_j\beta_{m}({\bf \large S},{\bf \large T})T_j$$

\end{lemma}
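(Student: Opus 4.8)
The plan is to prove the identity by a direct computation, reducing everything to the elementary index shift $\alpha \mapsto \alpha + e_j$, where $e_j \in \mathbb{Z}_+^d$ denotes the multi-index having $1$ in the $j$-th slot and $0$ elsewhere. First I would expand the conjugated term on the right. Because the entries of ${\bf S}$ commute among themselves and likewise those of ${\bf T}$, one has $S_j{\bf S}^\alpha = {\bf S}^{\alpha+e_j}$ and ${\bf T}^\alpha T_j = {\bf T}^{\alpha+e_j}$ for every $\alpha$ and every $j$; I would point out that no commutation \emph{between} the ${\bf S}$-entries and the ${\bf T}$-entries is needed here, since $S_j$ stands to the left of the whole word and $T_j$ to the right, so grouping as $(S_j{\bf S}^\alpha)({\bf T}^\alpha T_j)$ suffices. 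Substituting the definition of $\beta_m({\bf S},{\bf T})$ then gives
$$\sum_{1\le j\le d} S_j\,\beta_m({\bf S},{\bf T})\,T_j = \sum_{0\le k\le m}(-1)^{m-k}\binom{m}{k}\sum_{|\alpha|=k}\frac{k!}{\alpha!}\sum_{1\le j\le d}{\bf S}^{\alpha+e_j}{\bf T}^{\alpha+e_j}.$$

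The key step is to reindex the inner double sum by $\beta := \alpha + e_j$, which satisfies $|\beta| = k+1$. For a fixed $\beta$ with $|\beta| = k+1$, the pairs $(\alpha,j)$ producing it are exactly those with $\alpha = \beta - e_j$ and $\beta_j \ge 1$, and since $(\beta - e_j)! = \beta!/\beta_j$ the coefficient of ${\bf S}^\beta{\bf T}^\beta$ collects to $\sum_{j:\beta_j\ge 1}\frac{k!}{(\beta-e_j)!} = \frac{k!}{\beta!}\sum_{j}\beta_j = \frac{(k+1)!}{\beta!}$, using $\sum_j\beta_j = k+1$. This is precisely the multinomial recurrence recorded in Remark 2.6. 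Hence
$$\sum_{1\le j\le d} S_j\,\beta_m({\bf S},{\bf T})\,T_j = \sum_{0\le k\le m}(-1)^{m-k}\binom{m}{k}\sum_{|\beta|=k+1}\frac{(k+1)!}{\beta!}{\bf S}^{\beta}{\bf T}^{\beta},$$
and after the shift $l = k+1$ this equals $\sum_{1\le l\le m+1}(-1)^{m+1-l}\binom{m}{l-1}\sum_{|\beta|=l}\frac{l!}{\beta!}{\bf S}^{\beta}{\bf T}^{\beta}$.

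Finally I would add $-\beta_m({\bf S},{\bf T})$, rewritten as $\sum_{0\le l\le m}(-1)^{m+1-l}\binom{m}{l}\sum_{|\beta|=l}\frac{l!}{\beta!}{\bf S}^{\beta}{\bf T}^{\beta}$, and match coefficients of $(-1)^{m+1-l}\sum_{|\beta|=l}\frac{l!}{\beta!}{\bf S}^\beta{\bf T}^\beta$ for each $l$ from $0$ to $m+1$. The term $l=0$ receives only $\binom{m}{0}$, the term $l=m+1$ only $\binom{m}{m}$, and for $1\le l\le m$ Pascal's rule gives $\binom{m}{l}+\binom{m}{l-1}=\binom{m+1}{l}$; with the boundary conventions $\binom{m}{-1}=\binom{m}{m+1}=0$ the combined coefficient is $\binom{m+1}{l}$ in every case. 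The resulting expression is exactly $\beta_{m+1}({\bf S},{\bf T})$, which proves the claim.

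The only genuine obstacle is bookkeeping: one must keep the sign $(-1)^{m-k}$, the binomial $\binom{m}{k}$, and the multinomial weight $\frac{k!}{\alpha!}$ correctly aligned through the reindexing. Consequently the two places to be careful are the coefficient collapse to $\frac{(k+1)!}{\beta!}$ (Remark 2.6) and the final application of Pascal's rule; everything else is routine.
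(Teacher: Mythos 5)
Your proof is correct and uses exactly the same ingredients as the paper's own argument: the index shift $\alpha\mapsto\alpha\pm e_j$, the multinomial recurrence of Remark 2.6, Pascal's rule, and only within-tuple commutativity. The only difference is direction — the paper starts from $\beta_{m+1}({\bf S},{\bf T})$ and splits it via Pascal's rule down to $-\beta_m({\bf S},{\bf T})+\sum_{1\le j\le d}S_j\beta_m({\bf S},{\bf T})T_j$, whereas you expand the right-hand side and reassemble $\beta_{m+1}({\bf S},{\bf T})$ — so the two computations are essentially identical.
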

\begin{proof}
\begin{eqnarray*}
\beta_{m+1}({\bf\large S},{\bf\large T})&=&
(-1)^{m+1}I_{\mathcal{H}}\!+\!\sum_{1\leq k \leq m}(-1)^k\big[
\binom{m}{k}\!+\!\binom{m}{k-1}\big]\sum_{|\alpha|=k}\frac{k!}{\alpha!}{\bf\large
S}^\alpha{\bf\large T}^\alpha
\!+\!\sum_{|\alpha|=m+1}\frac{(m+1)!}{\alpha!}{\bf\large
S}^\alpha{\bf\large T}^\alpha\\&=& -\sum_{0\leq k \leq
m}(-1)^k\binom{m}{k}\sum_{|\alpha|-k}\frac{k!}{\alpha!}{\bf\large
S}^\alpha{\bf\large T}^\alpha+\sum_{0\leq k\leq
m-1}(-1)^k\binom{m}{k}\sum_{|\alpha|=k+1}\frac{(k+1)!}{\alpha!}{\bf\large
S}^\alpha{\bf\large
T}^\alpha\\&&\sum_{|\alpha|=m+1}\frac{(m+1)!}{\alpha!}{\bf\large
S}^\alpha{\bf\large T}^\alpha\\&=&-\beta_m({\bf \large S,
T})+\sum_{0\leq k\leq
m-1}(-1)^k\binom{m}{k}\sum_{|\alpha|=k+1}\frac{k!(\alpha_1+...+\alpha_d)}{\alpha_1!.\alpha_2....\alpha_d!}{\bf\large
S}^\alpha{\bf\large
T}^\alpha\\&&+\sum_{|\alpha|=m+1}\frac{m!(\alpha_1+...+\alpha_d)}{\alpha_1!.\alpha_2....\alpha_d!}{\bf\large
S}^\alpha{\bf\large T}^\alpha
\\&=&-\beta_m{({\bf \large S,T})}\\&&+\sum_{1\leq j \leq
d}\sum_{0\leq k\leq
m-1}(-1)^k\binom{m}{k}\sum_{|\alpha|=k+1}\frac{k!\alpha_j}{\alpha_1!.\alpha_2!....\alpha_d!}\bigg(\\&&S_jS^{\alpha_1}...S_j^{\alpha_j-1}S_{j+1}^{\alpha_{j+1}}...S_d^{\alpha_d}
T^{\alpha_1}...T_j^{\alpha_j-1}T_{j+1}^{\alpha_{j+1}}...T_d^{\alpha_d}T_j\bigg)
\\&&+\sum_{1\leq j
\leq
d}\sum_{|\alpha|=m+1}\frac{m!\alpha_j}{\alpha_1!.\alpha_2!....\alpha_d!}S_jS^{\alpha_1}...S_j^{\alpha_j-1}S_{j+1}^{\alpha_{j+1}}...S_d^{\alpha_d}
T^{\alpha_1}...T_j^{\alpha_j-1}T_{j+1}^{\alpha_{j+1}}...T_d^{\alpha_d}T_j\\&=&
-\beta_m({\bf \large S,T})+\sum_{1\leq j \leq d}\sum_{0\leq k\leq
m-1}(-1)^k\binom{m}{k}\sum_{|\beta|=k}\frac{k!}{\beta!}S_j{\bf\large
S}^\beta{\bf T}^{\beta}T_j
\\&&+\sum_{1\leq j
\leq d}\sum_{|\alpha|=m}\frac{m!}{\beta!}S_j{\bf\large S}^\beta{\bf
T}^{\beta}T_j\\&=&-\beta_m({\bf\large S,T})+\sum_{1\leq j\leq
d}\sum_{0\leq k \leq
m}(-1)^k\binom{m}{k}\sum_{|\beta|=k}\frac{k1}{\beta!}S_j{\bf\large
S}^\beta{\bf\large T}^\beta T_j\\&=&-\beta_{m}({\bf \large S},{\bf
\large T})+\sum_{1\leq j \leq d}S_j\beta_{m}({\bf \large S},{\bf
\large T})T_j.
\end{eqnarray*}
\end{proof}
\par \vskip 0.2 cm \noindent For $k, n\in \mathbb{N}$ denote
the (descending Pochhammer) symbol by $ n^{(k)}$, i.e. $$
n^{(k)}=\left\{
\begin{array}{lll}
 0 ,\;\;\hbox{if}\;\;n=0\\
 \\
0 \;\hbox{if}\; n>0\;\;\hbox{and}\;\;k>n \\
\\
\binom{n}{k}k!\; \;\hbox{if}\;\; n>0 \;\;\hbox{ and}\;\;k\leq n.
\end{array}
  \right.$$
\begin{proposition}
Let  ${\bf \large S}=(S_1,S_2,...,S_d) \in
\mathcal{B}(\mathcal{H})^d $  and $ {\bf \large
T}=(T_1,T_2,...,T_d)\in \mathcal{B}(\mathcal{H})^d $ are an
commuting  operators . Then the following properties hold:
\begin{enumerate} \item
$$\sum_{|\alpha|=n}\frac{n!}{\alpha!}{\bf\large S}^\alpha {\bf\large T}^\alpha=\sum_{0\leq k\leq n}n^{(k)}\beta_k({\bf\large S,T}),\;\;\hbox{for all}\;\; n=0,1,....$$
\item  If ${\bf\large S}$ is an left $m$-inverse of $ {\bf \large T}$, then
$$\sum_{|\alpha|=n}\frac{n!}{\alpha!}{\bf\large S}^\alpha{\bf\large T}^\alpha=\sum_{0\leq k\leq m-1}n^{(k)}\beta_k({\bf\large S,T}),\;\;\hbox{for all}\;\; n=0,1,....$$
\end{enumerate}
\end{proposition}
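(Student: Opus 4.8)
The plan is to prove both formulas by induction on $n$, viewing the left-hand side $P_n:=\sum_{|\alpha|=n}\frac{n!}{\alpha!}{\bf \large S}^\alpha{\bf \large T}^\alpha$ as the $n$-fold iterate $\Phi^n(I)$ of the linear map $\Phi(X):=\sum_{1\le j\le d}S_jXT_j$ acting on $\mathcal{B}(\mathcal{H})$, and $\beta_k({\bf \large S},{\bf \large T})$ as $(\Phi-\mathrm{id})^k(I)$. Two preliminary facts drive everything. First, $P_{n+1}=\Phi(P_n)=\sum_{1\le j\le d}S_jP_nT_j$; this is exactly the multinomial (Pascal) recursion $\binom{n+1}{\gamma}=\sum_{1\le j\le d}\binom{n}{\gamma-e_j}$ recorded in Section 2, after reindexing $\alpha\mapsto\alpha+e_j$ and using that the $S_i$ commute among themselves and the $T_i$ commute among themselves (so that $S_j{\bf \large S}^\alpha{\bf \large T}^\alpha T_j={\bf \large S}^{\alpha+e_j}{\bf \large T}^{\alpha+e_j}$). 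Second, Lemma 4.1 rearranges to $\sum_{1\le j\le d}S_j\beta_k({\bf \large S},{\bf \large T})T_j=\beta_k({\bf \large S},{\bf \large T})+\beta_{k+1}({\bf \large S},{\bf \large T})$, i.e. $\Phi(\beta_k)=\beta_k+\beta_{k+1}$.

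Granting these, the first identity is a short induction. The base case is $P_0=I=\beta_0$. Writing $P_n=\sum_{0\le k\le n}c_{n,k}\beta_k$ and applying $\Phi$ gives
\[
P_{n+1}=\Phi(P_n)=\sum_{0\le k\le n}c_{n,k}\big(\beta_k+\beta_{k+1}\big)=\sum_{0\le k\le n+1}\big(c_{n,k}+c_{n,k-1}\big)\beta_k ,
\]
so the coefficients satisfy the Pascal-type recursion $c_{n+1,k}=c_{n,k}+c_{n,k-1}$ with $c_{0,0}=1$ and $c_{n,k}=0$ outside $0\le k\le n$. Conceptually the same conclusion is visible at once from $\Phi^n=(\mathrm{id}+(\Phi-\mathrm{id}))^n=\sum_{k}\binom{n}{k}(\Phi-\mathrm{id})^k$, since $\mathrm{id}$ commutes with $\Phi-\mathrm{id}$. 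Extracting the closed form of $c_{n,k}$ is the one genuinely computational step and the place where I expect to have to be careful: the recursion is solved by $\binom{n}{k}$, so the main obstacle is to verify that this matches the normalization $n^{(k)}$ appearing in the statement on the range $0\le k\le n$, reconciling the $k!$ in the falling-factorial notation against the weights $\frac{k!}{\alpha!}$ built into $P_n$ and $\beta_k$.

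For the second identity I would first note that ``${\bf \large S}$ is a left $m$-inverse of ${\bf \large T}$'' is precisely the assertion $\beta_m({\bf \large S},{\bf \large T})=0$. Inserting $k=m$ into the recursion $\beta_{k+1}=-\beta_k+\sum_{1\le j\le d}S_j\beta_kT_j$ of Lemma 4.1 then yields $\beta_{m+1}=0$, and a one-line induction gives $\beta_k=0$ for every $k\ge m$. Substituting this into the first identity annihilates all terms of index $k\ge m$, leaving $P_n=\sum_{0\le k\le m-1}n^{(k)}\beta_k$ for all $n$, which is the claimed truncation; the only case needing a separate glance is $n<m$, where the sum is already supported on $\{0,\dots,n\}$ and the vanishing of $n^{(k)}$ for $k>n$ makes the two expressions agree automatically. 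Beyond the coefficient bookkeeping in part (1), I do not foresee a substantial difficulty.
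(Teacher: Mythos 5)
Your functional-calculus route is correct and is genuinely different from the paper's argument. The paper proves part (1) by induction on $n$ in a more pedestrian way: it isolates the top term $k=n+1$ in the defining sum for $\beta_{n+1}(\mathbf{S},\mathbf{T})$, substitutes the induction hypothesis into the lower-order terms, and collapses the resulting double sum by citing ``a similar calculation as in [5]'' (a finite-difference identity for the coefficients), which it does not carry out. Your packaging — $P_n=\Phi^n(I)$ with $\Phi(X)=\sum_{1\le j\le d}S_jXT_j$, $\beta_k=(\Phi-\mathrm{id})^k(I)$, Lemma 4.1 read as $\Phi(\beta_k)=\beta_k+\beta_{k+1}$, and then $\Phi^n=(\mathrm{id}+(\Phi-\mathrm{id}))^n$ — makes the combinatorics automatic and self-contained, and your treatment of part (2) (Lemma 4.1 forces $\beta_k=0$ for all $k\ge m$, then truncate part (1), with $\binom{n}{k}=0$ for $k>n$ covering the case $n<m-1$) coincides with the paper's.

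However, the reconciliation you defer as ``the one genuinely computational step'' cannot be carried out, and you should say so plainly rather than hope the bookkeeping closes. With the paper's definition of $\beta_k$ (which carries no $1/k!$ factor — this is forced by Lemma 4.1, since $\beta_{k+1}=-\beta_k+\sum_jS_j\beta_kT_j$ is exactly $(\Phi-\mathrm{id})^{k+1}(I)=(\Phi-\mathrm{id})\beta_k$), your recursion gives the correct identity $P_n=\sum_{0\le k\le n}\binom{n}{k}\beta_k(\mathbf{S},\mathbf{T})$, and the printed statement with $n^{(k)}=\binom{n}{k}k!$ is false once $k\ge 2$. Test $d=1$, $n=2$: the claimed formula reads $S^2T^2=\beta_0+2\beta_1+2\beta_2=I-2ST+2S^2T^2$, i.e.\ $\beta_2=0$, which fails for generic $S,T$ (take $S=T=0$); the correct coefficient of $\beta_2$ is $\binom{2}{2}=1$, not $2^{(2)}=2$. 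The error originates in importing the falling-factorial notation of Bayart [5], where the $k$-th difference is normalized by $1/k!$; writing $\tilde\beta_k:=\beta_k/k!$ one does get $P_n=\sum_k n^{(k)}\tilde\beta_k$. The paper's own proof suffers from the same flaw (its inductive step can only ever produce coefficient $1$ for $\beta_{n+1}$, never $(n+1)^{(n+1)}=(n+1)!$, and its convention $0^{(0)}=0$ even breaks the base case $n=0$). So the correct conclusion of your argument is a corrected statement — replace $n^{(k)}$ by $\binom{n}{k}$, or renormalize $\beta_k$ — and with that correction your proof of both parts is complete.
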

\begin{proof}
\begin{enumerate}
\item  We prove the statement by  indication on $n$. For $n=0,1$ the statement is true.Suppose that the statement is true for $n$.

Form the identity $$\beta_{n+1}({\bf\large
S,T})=\displaystyle\sum_{0\leq k\leq
n+1}(-1)^{n+1-k}\binom{n+1}{k}\sum_{|\alpha|=k}\frac{k!}{\alpha!}{\bf\large
S}^\alpha{\bf\large T} ^\alpha$$ it follows that

$$
\sum_{|\alpha|=n+1}\frac{(n+1)!}{\alpha!}{\bf\large S}^\alpha{\bf
\large T}^\alpha=\beta_{n+1}({\bf \large S,T})-\sum_{0\leq k\leq
n}(-1)^{n+1-k}\binom{n+1}{k}\sum_{|\alpha|=k}\frac{k!}{\alpha!}{\bf\large
S}^\alpha{\bf \large T}^\alpha
$$
By the assumption and similar calculation as in $[5]$ we obtained
\begin{eqnarray*}
\sum_{|\alpha|=n+1}\frac{(n+1)!}{\alpha!}{\bf\large S}^\alpha{\bf
\large T}^\alpha &=&\beta_{n+1}({\bf \large S,T})-\sum_{0\leq k\leq
n}(-1)^{n+1-k}\binom{n+1}{k}\sum_{0\leq j\leq
k}k^{(j)}\beta_j({\bf\large S,T})\\&=&\sum_{0\leq k \leq
n+1}(n+1)^{(k)}\beta_k({\bf\large S,T}).
\end{eqnarray*}

\item The result follows immediately from the fact that if ${\bf \large S}$ is a  left $m$-inverse of ${\bf\large T}$
then $\beta_k({\bf\large S,T})=0$ for all $k\geq m$ (see Lemma 4.1).
\end{enumerate}
\end{proof}
\begin{remark}
If ${\bf\large T}=(T_1,T_2) \in \mathcal{B}(\mathcal{H})^2$ is an
left $2$-invertible with the left $2$-inverse
 ${\bf\large S}=(S_1,S_2) \in \mathcal{B}(\mathcal{H})^2$, then
 $$\sum_{\alpha_1+\alpha_2=n}\frac{n!}{\alpha_1!\alpha_2!}{\bf\large S}^\alpha{\bf\large T}^\alpha=n\big(S_1T_1+S_2T_2\big)-(n-1)I_{\mathcal{H}}$$

\end{remark}
 \noindent\begin{theorem} Let ${\bf\large T}=(T_1,T_2,...,T_d)\in
\mathcal{B}(\mathcal{H})^d$. If ${\bf \large T}$ possesses a left
$m$- inverse ${\bf \large S}=(S_1,S_2,...,S_d) \in
\mathcal{B}(\mathcal{H})^d$, then  the following statements hold:
\par \vskip 0.2 cm \noindent
(1)\; $[0]\not\subset \sigma_{ap}({\bf \large T}),$\par \vskip 0.2
cm \noindent (2)\; If $\lambda=(\lambda_1,...,\lambda_d) \in
\sigma_{ap}({\bf \large T})$, then
$(\displaystyle\frac{1}{d.\lambda_1},...,\displaystyle\frac{1}{d.\lambda_d})\in
\sigma_{ap}({\bf\large S})$,\vskip 0.2 cm \noindent (3)\;
 If $\lambda=(\lambda_1,...,\lambda_d) \in \sigma_{p}({\bf \large T})$, then
$(\displaystyle\frac{1}{d.\lambda_1},...,\displaystyle\frac{1}{d.\lambda_d})\in
\sigma_{p}({\bf\large S})$.

\end{theorem}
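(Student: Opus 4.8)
The plan is to exploit the multinomial structure of the defining relation $\beta_m({\bf S},{\bf T})=0$ so that, when tested against a joint (approximate) eigenvector of ${\bf T}$, everything collapses onto a single scalarised operator $A:=\sum_{1\le j\le d}\lambda_j S_j$. Once this reduction is in place, part (1) follows from a boundedness-below argument, while parts (2) and (3) amount to reading off the eigenvalue $1$ of $A$ and then redistributing it among the components $S_j$.

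For part (1) I would isolate in $\beta_m({\bf S},{\bf T})=0$ the term $k=0$, which is $(-1)^m I$, from the terms with $|\alpha|\ge1$. For each such $\alpha$, writing $j$ for the largest index with $\alpha_j>0$, the monomial factors as ${\bf T}^\alpha=\big(T_1^{\alpha_1}\cdots T_{j}^{\alpha_{j}-1}\big)T_{j}$, so every term with $|\alpha|\ge1$ ends in some $T_j$. Collecting these, I can rewrite $I=\sum_{1\le j\le d}C_jT_j$ for suitable bounded operators $C_j$ assembled from the ${\bf S}^\alpha$ and ${\bf T}^\alpha$. This is precisely condition (2) of Proposition 3.1, so that proposition gives a $\delta>0$ with $\sum_j\|T_jx\|\ge\delta\|x\|$; equivalently there is no unit sequence $(x_n)$ with $\|T_jx_n\|\to0$ for all $j$. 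Hence $(0,\dots,0)\notin\sigma_{ap}({\bf T})$, and as $(0,\dots,0)\in[0]$ we conclude $[0]\not\subset\sigma_{ap}({\bf T})$. I would also record here that the conclusions of (2)--(3) only make sense when all $\lambda_j\ne0$, i.e. $\lambda\notin[0]$, which I take as the standing hypothesis in those parts.

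For parts (2) and (3) the central step is the scalarisation. Given $\lambda\in\sigma_{ap}({\bf T})$ with a unit sequence $(x_n)$ satisfying $(T_j-\lambda_j)x_n\to0$, the factorisation of $T^\alpha-\lambda^\alpha I$ established in Section 3 shows $({\bf T}^\alpha-\lambda^\alpha)x_n\to0$ for every multi-index $\alpha$. Since each ${\bf S}^\alpha$ is bounded, applying $\beta_m({\bf S},{\bf T})=0$ to $x_n$ and replacing ${\bf T}^\alpha x_n$ by $\lambda^\alpha x_n$ yields, modulo a null sequence,
$$0=\sum_{0\le k\le m}(-1)^{m-k}\binom{m}{k}\sum_{|\alpha|=k}\frac{k!}{\alpha!}\lambda^\alpha{\bf S}^\alpha x_n+o(1).$$
Because the $S_j$ commute, the multinomial theorem collapses the inner sum to $\big(\sum_j\lambda_jS_j\big)^k$, and the alternating binomial sum then collapses to $\big(\sum_j\lambda_jS_j-I\big)^m$. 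Thus $(A-I)^mx_n\to0$ with $\|x_n\|=1$, so $A-I$ is not bounded below and $1\in\sigma_{ap}(A)$. Part (3) is the exact algebraic analogue: a genuine joint eigenvector $x\ne0$ gives $(A-I)^mx=0$, whence some $y=(A-I)^px\ne0$ satisfies $Ay=y$, i.e. $1\in\sigma_p(A)$.

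The \emph{main obstacle} is the final passage from the scalarised operator $A=\sum_j\lambda_jS_j$ back to the individual components of ${\bf S}$. What the computation delivers is only that $1$ is a (approximate) eigenvalue of $A$; writing $z_n=\sum_j\tfrac1d z_n$ this reads $\sum_j\lambda_j\big(S_j-\tfrac1{d\lambda_j}\big)z_n\to0$. A single vanishing sum of this form does not by itself force each summand $\big(S_j-\tfrac1{d\lambda_j}\big)z_n$ to vanish, yet that is exactly what $(\tfrac1{d\lambda_1},\dots,\tfrac1{d\lambda_d})\in\sigma_{ap}({\bf S})$ (resp. $\sigma_p({\bf S})$) requires. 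Closing this gap is the crux: one must upgrade the single scalar relation to the $d$ separate component relations, and I would attempt this by invoking the commutativity of ${\bf S}$ together with whatever extra joint structure links ${\bf S}$ to ${\bf T}$ and singles out the symmetric reciprocal point $\mu_j=\tfrac1{d\lambda_j}$, characterised by $\sum_j\lambda_j\mu_j=1$. I expect the proof to hinge precisely on this identification of the individual eigenvalues.
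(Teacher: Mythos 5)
Your part (1) is correct and is essentially the paper's argument in different clothing: both isolate the $k=0$ term $(-1)^m I$ of $\beta_m({\bf S},{\bf T})=0$ and use that every remaining monomial ${\bf S}^\alpha{\bf T}^\alpha$ ends, after commuting, in some $T_j$; you then quote Proposition 3.1, while the paper applies the same cancellation to a unit sequence witnessing $(0,\dots,0)\in\sigma_{ap}({\bf T})$ and derives the contradiction $x_n\to 0$. (The paper's ``let $\lambda\in[0]$'' only literally works for $\lambda=(0,\dots,0)$, since for a general point of $[0]$ the monomials $\lambda^\alpha$ need not all vanish; but that one point is all that is needed, so both proofs stand.) Your scalarisation for (2) and (3) --- replacing ${\bf T}^\alpha x_n$ by $\lambda^\alpha x_n$ and collapsing via the multinomial and binomial theorems to $\big(\sum_j\lambda_j S_j-I\big)^m x_n\to 0$, hence $1\in\sigma_{ap}\big(\sum_j\lambda_j S_j\big)$ --- is also exactly the paper's computation.

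The obstacle you single out at the end is a genuine gap, and you should know it cannot be closed: parts (2) and (3) are false as stated. The paper conceals the very same gap behind an invalid appeal to Proposition 3.1: from $\big(\sum_j\lambda_j(\frac{1}{d\lambda_j}-S_j)\big)^m x_n\to 0$ it infers a unit sequence on which every single operator $\frac{1}{d\lambda_j}-S_j$ tends to zero. But Proposition 3.1 equates joint bounded-belowness of a commuting tuple with joint left invertibility of that tuple; it does not convert failure of bounded-belowness of one linear combination into failure of joint bounded-belowness, and that implication is simply false: the tuple $(-\frac12 I,\frac12 I)$ is jointly bounded below, yet $1\cdot(-\frac12 I)+1\cdot(\frac12 I)=0$. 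This is precisely the configuration that kills the theorem. Concretely, take $d=2$, $m=1$, ${\bf T}=(I,I)$ and ${\bf S}=(I,0)$: both tuples commute and $S_1T_1+S_2T_2=I$, so ${\bf S}$ is a joint left $1$-inverse of ${\bf T}$; here $\lambda=(1,1)\in\sigma_p({\bf T})=\sigma_{ap}({\bf T})$, while $\sigma_p({\bf S})=\sigma_{ap}({\bf S})=\{(1,0)\}$, which does not contain $(\frac12,\frac12)$. So the strongest true conclusion is the scalarised statement you actually proved, namely $1\in\sigma_{ap}\big(\sum_j\lambda_j S_j\big)$ (resp.\ $1\in\sigma_p$); no extra structure singles out the symmetric point $(\frac{1}{d\lambda_1},\dots,\frac{1}{d\lambda_d})$ among the many tuples $\mu$ with $\sum_j\lambda_j\mu_j=1$, and your instinct to distrust that final redistribution was exactly right.
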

\begin{proof}
\par \vskip 0.2 cm \noindent (1)\; Suppose contrary to our claim that $[0]\subset \sigma_{ap}({\bf \large T})$ and let
$\lambda=(\lambda_1,...,\lambda_d) \in [0]$. Then there exists a
sequence $(x_n)_n \in \mathcal{H}$ such that  $$\Vert
x_n\Vert=1\;\;\hbox{ and}\;\; (T_j-\lambda_j)x_n\longrightarrow 0
\;\hbox{as}\;\;n\longrightarrow +\infty \;\hbox{for}\;j=1,2,...,d.$$
For $\alpha_j\geq 1$ we deduce that $$
(T_j^{\alpha_j}-\lambda_j^{\alpha_j})x_n\longrightarrow 0
\;\hbox{as}\;\;n\longrightarrow +\infty \;\hbox{for}\;j=1,2,...,d$$
 which mean that $({\bf\large T}^\alpha -\lambda^\alpha )x_n\longrightarrow
 0$ and hence,$\bigg({\bf\large S}^\alpha{\bf\large
T}^\alpha-\lambda^\alpha{\bf\large S}^\alpha\bigg)x_n\longrightarrow
0.$\par\vskip 0.2 cm \noindent Now, we get
\begin{eqnarray*}
\bigg({\bf\large S}^\alpha{\bf\large
T}^\alpha-\lambda^\alpha{\bf\large S}^\alpha\bigg)x_n\longrightarrow
0&\Longrightarrow& \sum_{0\leq k \leq
m}(-1)^{m-k}\binom{m}{k}\sum_{|\alpha|=k}\frac{k!}{\alpha!}\bigg({\bf\large
S}^\alpha{\bf\large T}^\alpha-\lambda^\alpha{\bf\large
S}^\alpha\bigg)x_n\longrightarrow 0\\&\Longrightarrow &
(-1)^mx_n+\sum_{1\leq k \leq
m}(-1)^{m-k}\binom{m}{k}\sum_{|\alpha|=k}\frac{k!}{\alpha!}\bigg(\prod_{1\leq
j\leq d}\lambda_j^{\alpha_j}\bigg){\bf\large S}^\alpha
x_n\longrightarrow 0\\&\Longrightarrow& x_n\longrightarrow
0\;\;\hbox{as}\;\; n\longrightarrow 0\;\;(\hbox{since}\;\; \lambda
\in [0]),
\end{eqnarray*}
  which is impossible.\par \vskip 0.2 cm \noindent (2)\; Let $\lambda=(\lambda_1,...,\lambda_d) \in
\sigma_{ap}({\bf \large T})$,  then there exists a sequence $(x_n)_n
\in \mathcal{H}$ such that
$$\Vert
x_n\Vert=1\;\;\hbox{ and}\;\; (T_j-\lambda_j)x_n\longrightarrow 0
\;\hbox{as}\;\;n\longrightarrow +\infty \;\hbox{for}\;j=1,2,...,d.$$

\begin{eqnarray*}
\bigg({\bf\large S}^\alpha{\bf\large
T}^\alpha-\lambda^\alpha{\bf\large S}^\alpha\bigg)x_n\longrightarrow
0&\Longrightarrow& \sum_{0\leq k \leq
m}(-1)^{m-k}\binom{m}{k}\sum_{|\alpha|=k}\frac{k!}{\alpha!}\bigg({\bf\large
S}^\alpha{\bf\large T}^\alpha-\lambda^\alpha{\bf\large
S}^\alpha\bigg)x_n\longrightarrow 0\\&\Longrightarrow & \sum_{0\leq
k \leq
m}(-1)^{m-k}\binom{m}{k}\sum_{|\alpha|=k}\frac{k!}{\alpha!}\bigg(\prod_{1\leq
j\leq d}\lambda_j^{\alpha_j}\bigg){\bf\large S}^\alpha
x_n\longrightarrow 0\\&\Longrightarrow&
\bigg(I_{\mathcal{H}}-\sum_{1\leq j \leq d}\lambda_jS_j
\bigg)^mx_n\longrightarrow 0\;\;\hbox{as}\;\;n\longrightarrow
\infty.
\end{eqnarray*}
On the other hand, we have
$$\bigg(I_{\mathcal{H}}-\sum_{1\leq j \leq d}\lambda_jS_j
\bigg)^mx_n\longrightarrow 0\Longrightarrow \bigg(\sum_{1\leq j\leq
d}\lambda_j\bigg(\frac{1}{d.\lambda_j}-S_j\bigg)
\bigg)^mx_n\longrightarrow 0.$$ From Proposition 3.1, it follows
that there exists a sequence $(x_n)_n \subset \mathcal{H}$ such that
$\|x_n\|=1$ and $$\lim_{n\longrightarrow \infty}\|\big(
\frac{1}{d}.\frac{1}{\lambda_j}-S_j\big)x_n\|=0.
$$
From this $\bigg(\displaystyle\frac{1}{d.\lambda_j}\bigg)_{1\leq j
\leq d}\in \sigma_{ap}({\bf \large S}).$ \par\vskip 0.2 cm \noindent
(3)\; The argument is similar to one given in (2).
\end{proof}
\noindent The proof of the following theorem is similar to the proof
of Theorem 4.1, so we omit it.
\begin{theorem}
Let ${\bf\large T}=(T_1,T_2,...,T_d)\in \mathcal{B}(\mathcal{H})^d$.
If ${\bf \large T}$ possesses a right $m$- inverse ${\bf \large
R}=(R_1,R_2,...,R_d) \in \mathcal{B}(\mathcal{H})^d$, then  the
following statements hold:\par\vskip 0.2 cm \noindent (1)\;
 $[0]\not\subset \sigma_{ap}({\bf\large R}).$
 \par\vskip 0.2 cm \noindent (2)\; If $\lambda=(\lambda_1,...,\lambda_d)\in \sigma_{ap}({\bf\large R})$, then $ \big(\displaystyle\frac{1}{d.\lambda_1},...,\frac{1}{d.\lambda_d}\big)\in \sigma_{ap}({\bf\large
 T})$.\par\vskip 0.2 cm \noindent (3)\;If $\lambda=(\lambda_1,...,\lambda_d)\in \sigma_{p}({\bf\large R})$ ,then $ \big(\displaystyle\frac{1}{d.\lambda_1},...,\frac{1}{d.\lambda_d}\big)\in \sigma_{p}({\bf\large
 T}).$
\end{theorem}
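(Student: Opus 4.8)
The plan is to read the result off from Theorem 4.1 by recording the evident symmetry between the two notions in Definition 4.1. The key observation is that ${\bf \large R}$ is a joint right $m$-inverse of ${\bf \large T}$ if and only if ${\bf \large T}$ is a joint left $m$-inverse of ${\bf \large R}$. Indeed, substituting ${\bf \large S}\mapsto {\bf \large T}$ and relabelling the second tuple as ${\bf \large R}$ in the defining left-inverse relation turns $\sum_{0\leq k\leq m}(-1)^{m-k}\binom{m}{k}\sum_{|\alpha|=k}\frac{k!}{\alpha!}{\bf \large S}^\alpha{\bf \large T}^\alpha=0$ into $\sum_{0\leq k\leq m}(-1)^{m-k}\binom{m}{k}\sum_{|\alpha|=k}\frac{k!}{\alpha!}{\bf \large T}^\alpha{\bf \large R}^\alpha=0$, which is exactly the right-inverse relation $\beta_m({\bf \large T},{\bf \large R})=0$. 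Thus the hypothesis of this theorem coincides with the hypothesis of Theorem 4.1 after interchanging the two tuples.

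With this identification I would simply apply Theorem 4.1 to ${\bf \large R}$ in the role of the operator tuple, its joint left $m$-inverse being ${\bf \large T}$. Statement (1) of Theorem 4.1 then reads $[0]\not\subset\sigma_{ap}({\bf \large R})$, i.e. assertion (1) here; statement (2) says that $\lambda\in\sigma_{ap}({\bf \large R})$ forces $\big(\frac{1}{d\lambda_1},\dots,\frac{1}{d\lambda_d}\big)\in\sigma_{ap}({\bf \large T})$, which is assertion (2); and statement (3) gives the corresponding joint point-spectrum inclusion, which is assertion (3). No additional computation is required once the duality is in place, and this is precisely why the proof is ``similar'' to that of Theorem 4.1.

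For completeness one can instead mirror the argument of Theorem 4.1 directly. For (1), test the relation $\beta_m({\bf \large T},{\bf \large R})=0$ at the origin: if $0\in\sigma_{ap}({\bf \large R})$ there are unit vectors $x_n$ with $R_jx_n\to0$, so ${\bf \large R}^\alpha x_n\to0$ for $|\alpha|\geq1$ and the relation collapses to $(-1)^m x_n\to0$, contradicting $\|x_n\|=1$; hence $[0]\not\subset\sigma_{ap}({\bf \large R})$. For (2) and (3) one takes $\lambda\in\sigma_{ap}({\bf \large R})$ (resp. $\sigma_p({\bf \large R})$) with each $\lambda_j\neq0$, uses $(R_j^{\alpha_j}-\lambda_j^{\alpha_j})x_n\to0$ to obtain $({\bf \large R}^\alpha-\lambda^\alpha)x_n\to0$, multiplies the defining relation through, discards the vanishing terms, and is left with $\big(I_{\mathcal H}-\sum_{1\leq j\leq d}\lambda_j T_j\big)^m x_n\to0$; rewriting the bracket as $\sum_{1\leq j\leq d}\lambda_j\big(\frac{1}{d\lambda_j}-T_j\big)$ and invoking Proposition 3.1 yields the desired approximating (resp. exact) eigen-sequence for ${\bf \large T}$ at $\big(\frac{1}{d\lambda_1},\dots,\frac{1}{d\lambda_d}\big)$.

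The one step that genuinely needs care, and the reason the hypotheses are arranged as they are, is this last passage from $\big(I_{\mathcal H}-\sum_{1\leq j\leq d}\lambda_j T_j\big)^m x_n\to0$ back to joint approximate eigenvectors of ${\bf \large T}$: it requires each $\lambda_j\neq0$ (so that $\lambda\notin[0]$ and the reciprocals $\frac{1}{d\lambda_j}$ are defined, with $\sum_{1\leq j\leq d}\lambda_j\cdot\frac{1}{d\lambda_j}=1$ reproducing the identity term) together with Proposition 3.1 to convert a lower bound on the single combination $\sum_{1\leq j\leq d}\lambda_j(\frac{1}{d\lambda_j}-T_j)$ into the joint statement. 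Since this is exactly the mechanism already justified in Theorem 4.1, the duality reduction of the first two paragraphs is the more economical route and the one I would adopt.
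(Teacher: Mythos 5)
Your proposal is correct, and its primary route is in fact tidier than what the paper does: the paper omits the proof entirely, remarking only that it is ``similar to the proof of Theorem 4.1,'' i.e.\ it intends the reader to re-run that whole argument with the roles of the two tuples exchanged --- which is essentially your third paragraph. Your key observation upgrades ``similar'' to ``identical'': the right $m$-inverse relation $\sum_{0\leq k\leq m}(-1)^{m-k}\binom{m}{k}\sum_{|\alpha|=k}\frac{k!}{\alpha!}\mathbf{T}^\alpha\mathbf{R}^\alpha=0$ for the pair $(\mathbf{T},\mathbf{R})$ is literally the left $m$-inverse relation for the pair $(\mathbf{R},\mathbf{T})$, and since both tuples are commuting the hypotheses of Theorem 4.1 are met with $\mathbf{R}$ as the operator tuple and $\mathbf{T}$ as its left $m$-inverse; all three assertions then fall out by direct substitution, with no new computation and with all the analytic content quarantined inside Theorem 4.1. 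Your backup direct argument matches the paper's intended mirror, and it handles the two points that need handling: part (1) is correctly tested at $\lambda=0$ (giving $0\notin\sigma_{ap}(\mathbf{R})$, which is exactly what $[0]\not\subset\sigma_{ap}(\mathbf{R})$ requires), and you rightly note that (2) and (3) only make sense for $\lambda\notin[0]$, a hypothesis the paper's statement leaves implicit. One caveat: your phrase ``exactly the mechanism already justified in Theorem 4.1'' for the passage from $\big(I_{\mathcal H}-\sum_{1\leq j\leq d}\lambda_j T_j\big)^m x_n\to 0$ to $\big(\frac{1}{d\lambda_1},\dots,\frac{1}{d\lambda_d}\big)\in\sigma_{ap}(\mathbf{T})$ is charitable --- deducing a \emph{joint} approximate eigenvalue at that particular point from the non-bounded-belowness of a single linear combination via Proposition 3.1 is the delicate spot of Theorem 4.1 itself, not something your mirror would independently secure; your duality reduction at least avoids re-deriving it and makes Theorem 4.2 exactly as valid as Theorem 4.1, which is the most one can ask of a proof the paper itself reduces to that theorem.
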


\end{document}